\theoremstyle{plain}
\theoremstyle{definition}
\newtheorem{thm}{Theorem}
\newtheorem{lem}[thm]{Lemma}
\newtheorem{cor}[thm]{Corollary}
\newtheorem{prop}[thm]{Proposition}
\newtheorem{ex}[thm]{Example} 
\newtheorem{rem}[thm]{Remark}
\numberwithin{equation}{section}
\numberwithin{thm}{section}
\newcommand{\N}{\ensuremath{\mathbb{N}}}
\newcommand{\Z}{\ensuremath{\mathbb{Z}}}
\newcommand{\Fq}{\ensuremath{\mathbb{F}_q}}
\newcommand{\Fqq}{\ensuremath{\mathbb{F}_{q^2}}}
\newcommand{\F}{\ensuremath{\mathbb{F}}}
\newcommand{\al}{\ensuremath{\alpha}}
\newcommand{\be}{\ensuremath{\beta}}
\newcommand{\gam}{\ensuremath{\gamma}}
\newcommand{\lam}{\ensuremath{\lambda}}
\title{Constacyclic and Quasi-Twisted Hermitian Self-Dual  Codes over Finite Fields\thanks{This research is supported by the Thailand Research Fund  under Research Grant TRG5780065.}}
\author{Ekkasit~Sangwisut$^a$, Somphong~Jitman$^{b,}$\thanks{Corresponding author.}   ~and Patanee Udomkavanich$^c$}
\date{}
\begin{document}
\maketitle


{\small
	\centerline{$^a$Department of Mathematics and Statistics, Faculty of Science, Thaksin University,}
	\centerline{Phatthalung Campus,  Phatthalung 93110, Thailand} 
}

\medskip
{\small
 \centerline{$^b$Department of Mathematics, Faculty of Science,  Silpakorn University,}
   \centerline{Nakhon Pathom 73000,  Thailand}
} 

\medskip
{\small
 \centerline{$^c$Department of Mathematics and Computer Science, Faculty of Science,}
   \centerline{Chulalongkorn University, Bangkok 10330, Thailand}
}

\bigskip


\begin{abstract}
	Constacyclic and quasi-twisted Hermitian self-dual  codes over  finite  fields  are studied.  An algorithm for factorizing  $x^n-\lam$ over $\Fqq$ is given, where $\lambda$ is a unit in $\Fqq$.  Based on this factorization,   the dimensions of the Hermitian hulls of $\lam$-constacyclic codes of length $n$ over $\Fqq$ are determined.  The characterization and enumeration of  constacyclic  Hermitian self-dual (resp., complementary dual) codes of length $n$ over $\Fqq$ are given through their Hermitian hulls.  Subsequently, a new family of MDS  constacyclic Hermitian self-dual codes over $\Fqq$ is introduced. 
	
	As a generalization of constacyclic codes,      quasi-twisted Hermitian self-dual  codes are studied. Using the factorization of  $x^n-\lam$  and the Chinese Remainder Theorem,  quasi-twisted codes can be viewed as a product of linear codes of  shorter length some over  extension fields of $\Fqq$. Necessary and sufficient conditions for quasi-twisted codes to be Hermitian self-dual are given. The enumeration of such self-dual codes is determined as well.
\end{abstract}

\section{Introduction}
Quasi-twisted (QT) codes,  introduced  in \cite{Berlekamp}, 
play an important role in coding theory since they
contain remarkable classes of codes such as    quasi-cyclic (QC) codes, constacyclic codes, and cyclic codes. In \cite{Chepyzhov}, \cite{Kasami} and \cite{Ling2003},  it has been shown that QT and QC codes meet a modified version of the Gilbert-Vashamov bound.  Various codes with good  parameters and some optimal codes over finite fields 
have been  obtained from the  classes of QT and QC codes  (see \cite{Daskalov2003}, \cite{Aydin2007}, \cite{Chen2008} and  \cite{Aydin2001}).   Moreover, there is a link between QC codes and convolution codes in \cite{Gulliver1998} and \cite{Solomon1979}.

Constacyclic codes  are an important subclass of   QT codes due to their nice algebraic structures and various applications  in engineering 
\cite{Bakshi2012}, \cite{GHM2007} and \cite{Chen2012}.  Such codes are optimal in some  cases (see,  \cite{Yang13}, \cite{Chen}, \cite{La}, \cite{GHM2007} and  \cite{Kai2014}). These motivate the study of constacyclic codes     in \cite{Jia2011}, \cite{Bakshi2012}, \cite{Yang13},  \cite{Chen2012}, \cite{OO2009} and  \cite{LLB2015}.

Self-dual codes are another interesting class of codes due to their   fascinating links to other objects and their wide applications \cite{NRS2006} and \cite{Sloane}. 
Both Euclidean and Hermitian self-dual codes are also closely related to quantum stabilizer codes \cite{KKKS2006}. 
In \cite{LingI},  
\cite{LingIII}, \cite{LingIV} and  \cite{Jia}, QT and QC codes have been decomposed into a product of  linear codes of shorter length and  the Euclidean duals of such codes   have been determined via this decomposition. 
Consequently, the characterization  of   QT and QC Euclidean self-dual codes have been   given. In some cases, the enumeration of such codes has been established as well.

To the best of our knowledge,  only few  works have been done on Hermitian duals of constacyclic and QT codes. In \cite{Yang13}, a characterization of  Hermitian duals of constacyclic Hermitian self-dual  codes has been established but not an enumeration.  
It is therefore of natural interest to characterize and enumerate  constacyclic  and QT codes with Hermitian self-duality. 


Our goal is to study constacyclic and QT codes and their duals with respect to the Hermitian inner product which are defined over a finite field whose cardinality is square. Throughout the paper, we are therefore assume that the cardinality of a field is square and the notation $\Fqq$ will be used.

For a nonzero $\lam \in \Fqq$, let $o_{q^2}(\lam)$ denote the order of   $\lam$ in the multiplicative group $\F_{q^2}^\times:=\F_{q^2}\smallsetminus \{0\}$.  In \cite[Proposition 2.3]{Yang13}, it has been shown  that 
the Hermitian dual of a $\lam$-constacyclic code is also $\lam$-constacyclic if and only if $o_{q^2}(\lam)|(q+1)$. Later, in Proposition \ref{prop20}, we show that the Hermitian dual of a $(\lam, \ell)$-QT code over $\Fqq$ is again $(\lam, \ell)$-QT  if and only if $o_{q^2}(\lam) |(q+1)$. To study constacyclic and QT Hermitian self-dual codes, it suffices to restrict the study to the case where $o_{q^2}(\lam)|(q+1)$.    For $\lambda\in\{1,-1\}$ (or equivalently, $o_{q^2}(\lam)\in\{1,2\}$), $\lambda$-constacyclic  Hermitian self-dual codes have been studied in \cite{Sangwisut}.   In this paper, we give  the characterization and enumeration of $\lam$-constacyclic  Hermitian self-dual codes of any length $n$ and over $\Fqq$ for every nonzero $\lam\in\Fqq$ such that    $o_{q^2}(\lam)|(q+1)$. Subsequently, the characterization and enumeration of QT  Hermitian self-dual codes of  length $n\ell$ over $\Fqq$ are given in the case where $\gcd(q,n)=1$.

The paper is organized as follows. In Section 2, some preliminary concepts and proofs of some basic results are discussed.  An algorithm for  explicit factorization of $x^n-\lam$ over $\Fqq$ which is key to study constacyclic and QT codes is given in Section 3.
In Section 4,  the characterization of the Hermitian hulls of constacyclic codes of any length $n$ over $\Fqq$ is given.  Subsequently, necessary and sufficient conditions for constacyclic codes of length  $n$ over $\Fqq$ to be Hermitian self-dual (resp., Hermitian complementary dual) are determined together with the number of such codes.  A new family of MDS constacyclic Hermitian self-dual  codes over $\Fqq$ is introduced in Section 5. 
The decomposition for quasi-cyclic codes is generalized  to the case of    quasi-twisted codes  in Section 6.  The number of  $(\lam, \ell)$-QT Hermitian self-dual codes of length $n\ell$ over $\Fqq$ is also determined
\section{Preliminaries}
In this section, we  recall some  basic properties of   codes and polynomials over finite fields.

Let $\Fqq$ denote a finite field of order $q^2$. For a positive integer $n$, denote by $\Fqq^n$ the vector space of all vectors of length $n$  over $\Fqq$. A \textit{linear code} $C$ of length $n$ and dimension $k$ over $\Fqq$ is a $k$-dimensional subspace of $\Fqq^n$. 
A linear code $C$ over $\Fqq$ is said to have parameters  $[n,k,d]$ if  $C$ is of  length $n$, dimension $k$, and minimum  Hamming distance ${d}=\min\{\omega(\textbf{c})\mid \textbf{0}\neq \textbf{c}\in C \}$, where $\omega(\textbf{c})$ denotes the Hamming weight of  $\textbf{c}$. The parameters of every  $[n,k,d]$ linear code    satisfy the Singleton bound 
\[k\leq n-d+1.\]
An  $[n,k,{d}]$   linear code  over $\Fqq$ is said to be  a \textit{maximum distance separable (MDS) code} if  $k=n-{d}+1$.

For a linear code $C$ over $\Fqq$,   the {Euclidean dual} $C^{\perp_E}$ of   $C$ is defined under the  {\em Euclidean inner product}  \[\left\langle \textbf{a},\textbf{b}\right\rangle_E:=\sum_{i=0}^{n-1}a_ib_i, \]
where $\textbf{a}=(a_0,\ldots,a_{n-1}), \textbf{b}=(b_0,\ldots,b_{n-1})\in \Fqq^n$.
A code $C$ is said to be {\em Euclidean self-dual}  if $C=C^{\perp_E}$.


The {\em Hermitian dual} $C^{\perp_H}$ of $C$  is defined under  the {\em Hermitian inner product}
\[\left\langle \textbf{a},\textbf{b}\right\rangle_H:=\sum_{i=0}^{n-1}a_ib_i^q,\] where $\textbf{a}=(a_0,\ldots,a_{n-1}),$ $\textbf{b}=(b_0,\ldots,b_{n-1})\in \Fqq^n$.
The \textit{Hermitian hull} of $C$   is defined to be $Hull_H(C)=C\cap C^{\perp_H}$.    A linear code  $C$  is said to be \textit{Hermitian self-dual} 
(resp., \textit{Hermitian complementary dual}) if    $C=Hull_H(C)=C^{\perp_H}$   (resp., $Hull_H(C)=\{0\}$).  The Euclidean hull  of a linear code  $C$   is defined in the same fasion and studied in \cite{Sangwisut}.

\subsection{Constacyclic Codes}
Given a nonzero $\lambda\in\Fqq$, a linear code $C$ of length $n$ over $\Fqq$ is said to be \textit{constacyclic}, or specifically, \textit{$\lambda$-constacyclic} 
if for each $(c_0, c_1,\ldots, c_{n-1})\in C$, the vector
$(\lambda c_{n-1}, c_0,\ldots, c_{n-2})$
is again a codeword in $C$.
A $\lam$-constacyclic code is called {\em cyclic} and {\em negacyclic} if $\lam=1$ and $\lam=-1$, respectively. It is well known (see, for example, \cite{Yang13}) that every  $\lam$-constacyclic code $C$ of length $n$ over $\Fqq$ can be  identified with  an ideal in   $\Fqq[x]/\langle x^n-\lam\rangle$   generated by a unique monic divisor of $x^n-\lam$. Such a  polynomial is  called the {\em generator polynomial} of $C$.

Given a polynomial $f(x)=a_0+a_1x+\ldots+a_kx^k\in \Fqq[x]$ with  nonzeros $a_0$ and $a_k$, denote by  $f^\dagger(x):=a_0^{-q}\sum_{i=0}^ka_i^qx^{k-i}$  the \textit{conjugate-reciprocal polynomial} of $f(x)$. The polynomial $f(x)$ is said to be \textit{self-conjugate-reciprocal} if $f(x)=f^\dagger(x)$. Otherwise, $f(x)$ and $f^\dagger(x)$ are called a \textit{conjugate-reciprocal polynomial pair}.

Let $g(x)$ be the generator polynomial of a $\lambda$-constacyclic code $C$ of length $n$ over $\Fqq$ and let $h(x)=\frac{x^n-\lambda}{g(x)}$. Then $h^\dagger(x)$ is a monic divisor of $x^n-\lambda$ and it is the generator polynomial of $C^{\perp_H}$ (see \cite[Lemma 2.1]{Yang13}). Therefore, $C$ is Hermitian self-dual if and only if $g(x)=h^\dagger(x)$.
By \cite[Theorem 1]{Sangwisut}, $Hull_H(C)$ is generated by ${\rm lcm}(g(x),h^\dagger(x))$.  
\subsection{Quasi-Twisted Codes}\label{qt}

View a codeword in  a linear code $C$ of length $n\ell $ over $\Fqq$ as an $n\times \ell$ matrix over   $\Fqq$.
Given a nonzero $\lam\in\Fqq$, a linear code $C$ of length $n\ell $ over $\Fqq$ is said to be \textit{$(\lam,\ell)$-quasi-twisted ($(\lam,\ell)$-QT) of length $n\ell$ over $\Fqq$} if for each

\begin{align*}
\textbf{c}=\left[
\begin{array}{cccc}
c_{00}&c_{01}&\dots&c_{0,\ell-1}\\
c_{10}&c_{11}&\ldots&c_{1,\ell-1}\\
\vdots&\vdots&\ddots&\vdots\\
c_{n-1,0}&c_{n-1,1}&\ldots&c_{n-1,\ell-1}
\end{array}
\right]\in C, \end{align*}
the vector 
\begin{align*} 
\textbf{c}'=\left[
\begin{array}{cccc}
\lambda c_{n-1,0}&\lambda c_{n-1,1}&\dots&\lambda c_{n-1,\ell-1}\\
c_{00}&c_{01}&\ldots&c_{0,\ell-1}\\
\vdots&\vdots&\ddots&\vdots\\
c_{n-2,0}&c_{n-2,1}&\ldots&c_{n-2,\ell-1}
\end{array}
\right]
\end{align*}
is again a codeword in $C$. 
We define an action $T_{\lam, \ell}$ on the codewords as $T_{\lam, \ell}(\textbf{c})=\textbf{c}'$. Then every  $(\lam, \ell)$-QT code is invariant as a subspace under the action $T_{\lam,\ell}$.

Let $R:={\Fqq[x]}/{\left\langle x^n-\lam\right\rangle}$. Define a map $\psi:\Fqq^{n\ell }\rightarrow R^\ell$ by
\begin{align}\label{psi}\psi(\textbf{c})=\begin{bmatrix}c_0(x)\\c_1(x)\\\vdots\\c_{\ell-1}(x) \end{bmatrix}=\begin{bmatrix}c_{00}+c_{10}x+~~\ldots~~+c_{n-1,0}x^{n-1}\\c_{01}+c_{11}x+~~\ldots~~+c_{n-1,1}x^{n-1}\\\vdots\\c_{0,\ell-1}+c_{1,\ell-1}x+\ldots+c_{n-1,\ell-1}x^{n-1}\\  \end{bmatrix}.
\end{align}
Then the next lemma follows.
\begin{lem}\label{eqipp}
	The map $\psi$ induces a one-to-one correspondence between the QT-codes of length $n\ell $ over $\Fqq$ and the $R$-submodules of $R^\ell$.  
\end{lem}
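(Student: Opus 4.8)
The plan is to show that $\psi$ is an $\Fqq$-linear isomorphism of vector spaces which intertwines the shift $T_{\lam,\ell}$ with multiplication by $x$, and then to observe that the $R$-submodules of $R^\ell$ are precisely the $\Fqq$-subspaces stable under this multiplication. First I would note that reading off the columns of the matrix $\textbf{c}$ as coefficient vectors of length $n$ gives an $\Fqq$-linear bijection $\Fqq^{n\ell}\to R^\ell$, since $R\cong\Fqq^n$ as an $\Fqq$-space through the coordinate map and there are $\ell$ columns. Being an $\Fqq$-linear isomorphism, $\psi$ restricts to an inclusion-preserving bijection between the $\Fqq$-subspaces of $\Fqq^{n\ell}$ and those of $R^\ell$; in particular it carries linear codes to $\Fqq$-subspaces and back. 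It therefore remains only to match the two invariance conditions on each side.

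The key step is the identity $\psi(T_{\lam,\ell}(\textbf{c}))=x\cdot\psi(\textbf{c})$, where $x$ acts on $R^\ell$ componentwise by multiplication in $R$. To verify it I would compute, for the $j$-th column $c_j(x)=\sum_{i=0}^{n-1}c_{ij}x^i$, the product $x\,c_j(x)=\sum_{i=0}^{n-1}c_{ij}x^{i+1}$ and then reduce modulo $x^n-\lam$ using $x^n=\lam$ in $R$. This yields $\lam c_{n-1,j}+c_{0j}x+\dots+c_{n-2,j}x^{n-1}$, whose coefficient vector is exactly the $j$-th column of the shifted matrix $\textbf{c}'$. Thus the single constacyclic shift on each column corresponds to multiplication by $x$, and this is precisely where the twisting constant $\lam$ enters, through the relation $x^n=\lam$.

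Next I would record the elementary algebraic fact that an $\Fqq$-subspace $M\subseteq R^\ell$ is an $R$-submodule if and only if it is closed under multiplication by $x$. One implication is immediate; for the converse, since $R=\Fqq[x]/\langle x^n-\lam\rangle$ is generated as an $\Fqq$-algebra by $x$, closure under scalar multiplication by $\Fqq$ together with closure under multiplication by $x$ forces closure under multiplication by every polynomial in $x$, hence by every element of $R$.

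Combining these facts, a linear code $C$ of length $n\ell$ is $(\lam,\ell)$-QT exactly when it is stable under $T_{\lam,\ell}$, which by the intertwining identity is equivalent to $\psi(C)$ being stable under multiplication by $x$, which in turn is equivalent to $\psi(C)$ being an $R$-submodule of $R^\ell$. As $\psi$ is an $\Fqq$-linear bijection, these equivalences pair off the $(\lam,\ell)$-QT codes with the $R$-submodules bijectively, giving the lemma. I expect the main (and essentially the only substantive) obstacle to be the column computation of the second paragraph; the remaining steps are formal bookkeeping about the generation of $R$ by $x$ and the subspace correspondence induced by a linear isomorphism.
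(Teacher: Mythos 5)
Your proof is correct and is exactly the argument the paper leaves implicit (the paper states this lemma without proof, as following directly from the definition of $\psi$): the intertwining identity $\psi(T_{\lam,\ell}(\textbf{c}))=x\cdot\psi(\textbf{c})$, obtained from the relation $x^n=\lam$ in $R$, together with the observation that an $\Fqq$-subspace of $R^\ell$ is an $R$-submodule if and only if it is stable under multiplication by $x$. There is nothing to correct; your write-up simply makes explicit the bookkeeping the paper takes for granted.
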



\section{The Factorization of $x^n-\lam$ in $\Fqq[x]$}
In this section, we give an algorithm for the factorization of  $x^n-\lam$ in $\Fqq[x]$  which is key to study both the structures of $\lambda$-constacyclic and $(\lam, \ell)$-QT codes.  

Let $\lam$ be a nonzero element in $ \Fqq$ such that $o_{q^2}(\lam)=r$ and let $n$ be a positive integer   written in the form of $n={n^\prime}p^\nu$, where $p={\rm char}(\Fqq),~ p\not\mid n^\prime $ and $\nu\geq 0$. 

Since the map $a\mapsto a^{p^{\nu}}$ on $\Fqq$ is a power of the Frobenious automorphism of $\Fqq$ over $\F_p$, there is a unique ${\Lambda}\in\Fqq$ such that ${\Lambda}^{p^{\nu}}=\lambda$. Then  
\begin{align}\label{xn-lam}
x^n-\lam=\left(x^{{n^\prime}}-{\Lambda}\right)^{p^\nu}.
\end{align} 
Since an  automorphism is order preserving, we have $o_{q^2}({\Lambda})=o_{q^2}(\lam)=r$. Therefore, it is sufficient to  focus on the factorization of $x^{{n^\prime}}-{\Lambda}$.

Let $k$ be the smallest integer such that ${(n^\prime}r)|(q^{2k}-1)$. Then, there exists  a primitive ${n^\prime}r$th root of  $\xi$ in $\F_{q^{2k}}$ such that $\xi^{{n^\prime}}={\Lambda}$, and hence,  
\begin{align}\label{xn-xi}
x^{n^\prime}-\Lambda=x^{n^\prime}-\xi^{n^\prime}.
\end{align}
It is not dificult to see that $x^{n^\prime}-\xi^{n^\prime}$ divides $x^{{n^\prime}r}-1 $. Since 
$x^{{n^\prime}r}-1=\displaystyle\prod_{j\mid n^\prime r}Q_j(x)$, where $$Q_j(x):=\prod_{z\in \Z_j^\times} \left(x-\xi^{\left(\frac{{n^\prime}r}{j}\right)z}\right)$$
is the \textit{$j$th cyclotomic polynomial} over $\Fqq$ (see \cite{Jia2011}) , we have   
\begin{align}\label{bdcd}
x^{n^\prime}-\xi^{n^\prime} &=\gcd\left(\prod_{j\mid n^\prime r}Q_j(x), x^{{n^\prime}}-\xi^{{n^\prime}}\right)   =\prod_{j\mid n^\prime r}\gcd\left(Q_j(x), x^{{n^\prime}}-\xi^{{n^\prime}}\right).
\end{align}
Hence, for each divisor $j$ of $n^\prime r$,~~$ \xi^{\left(\frac{{n^\prime}r}{j}\right)z}$ is a root of $x^{n^\prime}-\xi^{n^\prime}$ if and only if $\xi^{{n^\prime}\left(\frac{{n^\prime}r}{j}\right)z}=\xi^{{n^\prime}}\,$, or equivalently, $\left(\frac{{n^\prime}r}{j}\right)z\equiv 1\,{\rm mod}\, r$.
The set of elements in $\Z_{{n^\prime}r}$ satisfying the preceeding conditions is denoted by
\begin{align*}
S_j:=\left\{\left(\frac{{n^\prime}r}{j}\right)z\in\Z_{{n^\prime}r}\,\left\vert\, z\in \Z_j^\times,\; {\left(\frac{{n^\prime}r}{j}\right)z}\equiv {1}\,{\rm mod}\, r\right.\right\}.
\end{align*}
In other words, $S_j$ is the set of all $s$'s such that $\xi^s$ is a root of 
\[\gcd(Q_j(x), x^{{n^\prime}}-\xi^{{n^\prime}})=\displaystyle\prod_{s\in S_j}(x-\xi^{s}).\] It follows that
\begin{align*}
\deg \gcd(Q_j(x), x^{{n^\prime}}-\xi^{{n^\prime}})=|S_j|.
\end{align*}
For each $j\mid n^\prime r$,   necessary and sufficient conditions for $S_j$ to be nonempty 
are given in the following proposition.
\begin{prop}\label{nonempt} Let $j$ be a positive divisor of $n^\prime r$. Then
	$S_j\neq\emptyset$ if and only if \[\gcd\left(\frac{{n^\prime }r}{j},r\right)=1.\]
\end{prop}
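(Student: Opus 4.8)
The plan is to restate $S_j\neq\emptyset$ as an elementary congruence condition and then solve it. Writing $m:=\frac{n^\prime r}{j}$ for brevity, the definition of $S_j$ says precisely that $S_j\neq\emptyset$ if and only if there is an integer $z$ with $\gcd(z,j)=1$ (that is, $z\in\mathbb{Z}_j^\times$) for which $mz\equiv 1\pmod r$. Replacing $z$ by $z+j$ alters $mz$ by $mj=n^\prime r\equiv 0\pmod r$, so the congruence depends only on $z\bmod j$ and the question is genuinely one about units modulo $j$. Thus the whole proposition reduces to showing that such a $z$ exists exactly when $\gcd(m,r)=1$.

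The forward implication is immediate and uses nothing about $j$: if $mz\equiv 1\pmod r$, then $r\mid mz-1$, so $\gcd(m,r)$ divides both $mz$ and $mz-1$, hence divides their difference $1$, forcing $\gcd(m,r)=1$.

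For the converse I must exhibit a single $z$ that is simultaneously a solution of $mz\equiv 1\pmod r$ and a unit modulo $j$; this is where the real work lies. Assuming $\gcd(m,r)=1$, I would first take any $z_0$ with $mz_0\equiv 1\pmod r$, which exists because $m$ is invertible modulo $r$ and which automatically satisfies $\gcd(z_0,r)=1$. Since every $z\equiv z_0\pmod r$ still solves the congruence, I am free to adjust $z_0$ modulo $r$ so as to make it coprime to $j$. The key observation is that the prime divisors of $j$ split into two harmless types: a prime $\ell\mid\gcd(j,r)$ cannot divide $z$, because $z\equiv z_0\pmod\ell$ and $\ell\nmid z_0$ (as $\gcd(z_0,r)=1$), so such primes are avoided for free; a prime $\ell\mid j$ with $\ell\nmid r$ is coprime to $r$, so by the Chinese Remainder Theorem its residue may be prescribed freely, say $z\equiv 1\pmod\ell$, without disturbing $z\bmod r$.

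Assembling these constraints by the Chinese Remainder Theorem over the pairwise coprime moduli $r$ and the primes $\ell\mid j$ with $\ell\nmid r$ produces an integer $z$ with $z\equiv z_0\pmod r$ and $\ell\nmid z$ for every prime $\ell\mid j$; then $\gcd(z,j)=1$ and $mz\equiv mz_0\equiv 1\pmod r$, so $z\in\mathbb{Z}_j^\times$ witnesses $S_j\neq\emptyset$. I expect the only delicate point to be this converse direction---specifically, checking that the requirement $z\in\mathbb{Z}_j^\times$ imposes no obstruction beyond $\gcd(m,r)=1$---since the forward implication and the bare solvability of the congruence are routine.
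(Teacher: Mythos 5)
Your proof is correct, and its converse direction takes a genuinely different route from the paper's. The paper, after finding $w_1\in\Z_r^\times$ with $\frac{n^\prime r}{j}w_1\equiv 1\,{\rm mod}\,r$, invokes Dirichlet's theorem on primes in arithmetic progressions to pick a prime of the form $rm_1+w_1$ exceeding $j$; such a prime is automatically coprime to $j$, and reducing it modulo $j$ gives the desired unit. You instead lift the solution $z_0$ of the congruence to a unit modulo $j$ by pure Chinese Remainder Theorem bookkeeping: primes of $j$ dividing $r$ are handled for free by $\gcd(z_0,r)=1$, and primes of $j$ coprime to $r$ can have their residues prescribed independently of the class modulo $r$. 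Both arguments rest on the same (easy but essential) observation that the congruence $\left(\frac{n^\prime r}{j}\right)z\equiv 1\,{\rm mod}\,r$ depends only on $z\bmod j$ --- you state this explicitly, while the paper uses it silently when it replaces the prime $rm_1+w_1$ by its residue $w$ modulo $j$. The trade-off: the paper's proof is shorter to write once Dirichlet is granted, but it leans on a deep analytic theorem to do a job that is purely arithmetic; your argument is entirely elementary, self-contained, and effectively constructive, which is arguably the more natural proof of this finite statement.
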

\begin{proof}
	Assume that $S_j\neq \emptyset$. Then there exists $\left(\frac{{n^\prime }r}{j}\right)z\in S_j$. Then $\left(\frac{{n^\prime }r}{j}\right)z-rm=1$ for some $m\in\N$. It follows that $\gcd\left(\frac{{n^\prime }r}{j},r\right)=1$.

	Conversely, assume that  $\gcd\left(\frac{{n^\prime }r}{j},r\right)=1$. Then there exists $w_1\in\Z_r^\times$ such that $\frac{{n^\prime }r}{j} w_1\equiv 1 \,{\rm mod}\, r$. Observe that $(rm+w_1)\left(\frac{{n^\prime }r}{j}\right)\equiv 1 \,{\rm mod}\, r$ for all $m\in\Z^+$. By Dirichlet's theorem on arithmetic progressions (see \cite{Serre1973}), there exist infinitely many primes of the form $rm+w_1$. Let $m_1\in\Z^+$ be such that $rm_1+w_1$ is prime and $rm_1+w_1>j$. Hence, we obtain $w=(rm_1+w_1)\,{\rm mod}\,j$ such that $w\in\Z_j^\times$ and $w\left(\frac{{n^\prime }r}{j}\right)\equiv 1 \,{\rm mod}\, r$. Therefore, $S_j\neq \emptyset$ as desired.
\end{proof}

From now on, we focus only on the  positive divisors $j$ of $n^\prime r$ such that $S_j\neq\emptyset$, or equivalently, $\gcd\left(\frac{{n^\prime }r}{j},r\right)=1$. The cardinality of  $ S_j$ is determined in the following lemma.

\begin{lem}\label{lem5.8}
	Let $j$ be a positive divisor of ${n^\prime }r$  such that $\gcd\left(\frac{{n^\prime }r}{j}, r\right)=1$. Then $|S_j|=\frac {\phi(j)}{\phi(r)}$, where $\phi$ is the Euler's totient function.
\end{lem}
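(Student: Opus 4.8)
The plan is to reduce the computation of $|S_j|$ to a counting problem for units in $\Z_j$, and then to recognize the answer as the common fiber size of a surjective reduction homomorphism. Write $t=\frac{n^\prime r}{j}$, so that every element of $S_j$ has the form $tz\bmod n^\prime r$ with $z\in\Z_j^\times$. First I would observe that the map $z\mapsto tz\bmod n^\prime r$ is injective on residues modulo $j$: if $tz_1\equiv tz_2\pmod{n^\prime r}$, then $n^\prime r\mid t(z_1-z_2)$, and since $tj=n^\prime r$ this forces $z_1\equiv z_2\pmod j$. Consequently $|S_j|$ equals the number of $z\in\Z_j^\times$ satisfying the single congruence $tz\equiv 1\pmod r$.

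Next I would exploit the standing hypothesis $\gcd(t,r)=1$ in two ways. On one hand, it guarantees that $t$ is invertible modulo $r$, so the congruence $tz\equiv 1\pmod r$ is equivalent to $z\equiv t^{-1}\pmod r$ for a fixed residue $t^{-1}\in\Z_r^\times$. On the other hand, I would record the structural consequence $r\mid j$: writing $v_p(\cdot)$ for the exponent of a prime $p$, the condition $v_p(t)=v_p(n^\prime)+v_p(r)-v_p(j)=0$ for each $p\mid r$ forces $v_p(j)=v_p(n^\prime)+v_p(r)\ge v_p(r)$, and since $v_p(j)\ge 0=v_p(r)$ at the remaining primes, we obtain $r\mid j$. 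Thus $|S_j|$ is exactly the number of units $z\in\Z_j^\times$ lying in the fixed residue class $t^{-1}$ modulo $r$.

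Because $r\mid j$, reduction modulo $r$ gives a well-defined group homomorphism $\pi\colon\Z_j^\times\to\Z_r^\times$ (a unit modulo $j$ is automatically a unit modulo $r$, since every prime dividing $r$ divides $j$). The set counted in the previous step is precisely the fiber $\pi^{-1}(t^{-1})$. Since $t^{-1}\in\Z_r^\times$, it suffices to show that $\pi$ is surjective, for then every fiber is a coset of $\ker\pi$ and hence has the common size $|\Z_j^\times|/|\Z_r^\times|=\phi(j)/\phi(r)$, which is the claimed value. Surjectivity is the one point requiring genuine care, and is where I expect the main (though routine) obstacle to lie: given a unit $a$ modulo $r$, I would lift it to a unit modulo $j$ via the Chinese Remainder Theorem, taking the residue of the lift modulo each prime power $p^{v_p(j)}$ with $p\mid r$ to agree with $a$ (these are automatically nonzero modulo $p$ because $\gcd(a,r)=1$), and choosing the residues at the primes dividing $j$ but not $r$ to be nonzero modulo $p$. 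Once surjectivity is established, the equality $|S_j|=\phi(j)/\phi(r)$ follows immediately from the coset decomposition of $\Z_j^\times$ induced by $\pi$.
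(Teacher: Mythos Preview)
Your proposal is correct and follows essentially the same two-step strategy as the paper: first identify $|S_j|$ with the number of units in $\Z_j^\times$ lying in a fixed residue class modulo $r$, then compute that count using the Chinese Remainder Theorem. The execution differs slightly in emphasis. The paper builds an explicit bijection from $S_j$ to the kernel $H_j=\{h\in\Z_j^\times\mid h\equiv 1\bmod r\}$ of the reduction map, and then computes $|H_j|$ directly by factoring $j$ and $r$ into prime powers and counting $|H_{p^{a_i}}|=p^{a_i-b_i}$ at each prime. You instead keep the fiber over $t^{-1}$ rather than translating to the identity fiber, prove surjectivity of the reduction homomorphism $\pi\colon\Z_j^\times\to\Z_r^\times$ by a CRT lift, and invoke the coset decomposition to conclude that every fiber has size $\phi(j)/\phi(r)$. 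Your route is a little more conceptual (one group-theoretic sentence replaces the explicit prime-by-prime count), while the paper's route is more hands-on; both rely on the same structural input, namely $r\mid j$ and the Chinese Remainder Theorem.
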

\begin{proof}
	Let $H_j$ be defined by
	$H_j:=\left\{h\in \Z_j^\times \mid h\equiv 1\,{\rm mod}\, r \right\}.$
	We divide the proof into two steps. First, we show that $|S_j|=|H_j|$. Then we determine  $|H_j|$.
	
	By Proposition \ref{nonempt},  we have  $S_j\neq\emptyset$. Let $\left(\frac{{n^\prime }r}{j}\right)w\in S_j$, where $w\in \Z_j^\times$. Then there exists  $w'\in \Z_j^\times$  such that $ww'\equiv 1\,{\rm mod}\, j$.
	
	Let  $\Phi:S_j\rightarrow H_j$ be defined  by $\Phi\left(\left(\frac{n^\prime r}{j}\right)z\right)=w'z$. Since $w'z\equiv w'w\left(\frac{n^\prime r}{j}\right)z\equiv\left(\frac{n^\prime r}{j}\right)z\equiv 1\,{\rm mod}\, r$,  we have $w'z\in H_j$. Let $\left(\frac{n^\prime r}{j}\right)z_1=\left(\frac{n^\prime r}{j}\right)z_2$ in $S_j$.  Then  $\left(\frac{n^\prime r}{j}\right)(z_1-z_2)\equiv 0\,{\rm mod}\, n^\prime r$.  Since $j=\frac{n^\prime r}{n^\prime r/j}$,  we have  $z_1\equiv z_2\,{\rm mod}\, j$,  and hence,  $w'z_1\equiv w'z_2\,{\rm mod}\, j$. Therefore,  $\Phi$ is well-defined.
	
	Let  $z_1, z_2\in \Z_j^\times$ be such that $\Phi\left(\left(\frac{n^\prime r}{j}\right)z_1 \right)=\Phi\left(\left(\frac{n^\prime r}{j}\right)z_2 \right)$.
	Then $w'z_1=w'z_2$ in $\Z_j^\times$, {\em i.e.}, $w'z_1\equiv w' z_2\,{\rm mod}\,j$. 
	Hence, we have  $z_1\equiv z_2\,{\rm mod}\,j$. 
	It follows that $\left(\frac{n^\prime r}{j}\right)z_1\equiv \left(\frac{n^\prime r}{j}\right)z_2\,{\rm mod}\,n^\prime r$, and hence,  $\left(\frac{n^\prime r}{j}\right)z_1=\left(\frac{n^\prime r}{j}\right)z_2$ in $S_j$.  Therefore, $\Phi$ is injective.
	
	For each  $h\in H_j$, we have   $\left(\frac{n^\prime r}{j}\right)wh$ in $S_j$ and  $\Phi\left(\left(\frac{n^\prime r}{j}\right)wh\right)=w'wh\equiv h \,{\rm mod}\, j$. Then  $\Phi$ is surjective, and hence, it is a bijection.  Therefore,  $|S_j|=|H_j|$.


	By the Fundamental Theorem of Arithmetic, we have $j=p_1^{a_1}\ldots p_t^{a_t}$, where $p_1<p_2<\cdots<p_t$ are primes and $a_i$ is a positive integer. Since $\gcd(\frac{n^\prime r}{j}, r)=1$, we have $r| j$. Hence, we can write $r=p_1^{b_1}\ldots p_t^{b_t}$, where $b_i$ is non-negative integer and $b_i\leq a_i$ for all $1\leq i\leq t$. By the Chinese Remainder Theorem, 
	$$\Z_j^\times\cong \Z_{p_1^{a_1}}^\times\times\Z_{p_2^{a_2}}^\times\times\cdots\times \Z_{p_t^{a_t}}^\times$$
	and  each element in $H_j$ corresponds to
	$(z_1,\ldots,z_t)$ in $H_{p_1^{a_1}}\times\cdots\times H_{p_t^{a_t}}$.  	Therefore,
	\begin{align}\label{hp}
	|H_j|=|H_{p_1^{a_1}}|\cdot |H_{p_2^{a_2}}|\cdots|H_{p_t^{a_t}}|.
	\end{align}
	Note  that, for each $1\leq i\leq t$,  \[H_{p^{a_i}}=\left\{z\in\Z_{p^{a_i}}^\times\mid z\equiv 1 \,{\rm mod}\, p^{b_i} \right\} =\left\{1, 1+p^{b_i}, 1+2p^{b_i},\ldots,1+(p^{a_i-b_i}-1)p^b \right\}.\] Then  $|H_{p^{a_i}}|=p^{a_i-b_i}=\frac{p^{a_i}}{p^{b_i}}=\frac{\phi(p^{a_i})}{\phi(p^{b_i})}$.
	
	From (\ref{hp}), we conclude that 
	\begin{align*}
	|H_j|=\frac{\phi(p_1^{a_1})}{\phi(p_1^{b_1})}\cdot\frac{\phi(p_2^{a_2})}{\phi(p_2^{b_2})}\cdots \frac{\phi(p_t^{a_t})}{\phi(p_t^{b_t})}=\frac{\phi(j)}{\phi(r)}
	\end{align*}
	as desired.
\end{proof}

Therefore, for each  divisor $j$ of ${n^\prime }r$  with $\gcd\left(\frac{{n^\prime }r}{j}, r\right)=1$, we have 
$$\deg\gcd(Q_j(x), x^{{n^\prime}}-\xi^{{n^\prime}})=\frac {\phi(j)}{\phi(r)}.$$

Let $\pi$ be a map  defined on the pair $(j,q^2)$, where $i$ is a positive integer, by
$$\pi(j,q^2):=\begin{cases}
0 ~~&\text{~if~} j|(q^{2k}+q) \text{~for some~} k\geq 0,\\
1 &\text{~otherwise.~}
\end{cases}$$

For each positive integer $j$ such that $\gcd(j,q)=1$, \textit{the order of $q^2$ in the multiplicative group $\Z_j^\times$} is denoted by $ord_j(q^2)$.
The following lemma can be obtained by replacing $q$ with  $q^2$  in the proofs of \cite[Lemma 3 and Lemma 19]{Sangwisut}.
\begin{lem} 
	\label{numgb2}
	Let $j$ be a positive integer and let $\Fqq$ be a finite field with $\gcd(j,q)=1$. 
	The $j${\rm th} cyclotomic polynomial $Q_j(x)$ factors into $\frac{\phi(j)}{ord_j(q^2)}$ distinct monic irreducible polynomials over $\Fqq$ of the same degree $ord_j(q^2)$, where $\phi$ is the Euler's totient function.
	
	If $\pi(j, q^2)=0$, then all the irreducible polynomials in the factorization of $Q_j(x)$ are self-conjugate-reciprocal. 
	Otherwise, they form conjugate-reciprocal polynomial pairs.
\end{lem}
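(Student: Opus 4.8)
The plan is to prove the two assertions separately, both by the standard $q^2$-cyclotomic-coset analysis of the roots of $Q_j(x)$, with the second assertion additionally requiring an explicit description of how the conjugate-reciprocal operation $\dagger$ acts on those roots.

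For the first assertion, I would work in a splitting field of $x^j-1$ over $\Fqq$. Since $\gcd(j,q)=1$ forces $p\nmid j$ where $p={\rm char}(\Fqq)$, the polynomial $x^j-1$ is separable, so $Q_j(x)$ is squarefree and its roots are exactly the primitive $j$th roots of unity. Fixing a primitive $j$th root of unity $\zeta$, the minimal polynomial over $\Fqq$ of a root $\zeta^s$ with $\gcd(s,j)=1$ is $\prod_{m}(x-\zeta^{sq^{2m}})$, whose root set is the $q^2$-cyclotomic coset $\{sq^{2m}\bmod j\}$; this coset has size $ord_j(q^2)$ because $q^2$ is a unit modulo $j$. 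These cosets partition the $\phi(j)$ primitive roots into blocks of equal size $ord_j(q^2)$, so $Q_j(x)$ is the product of $\phi(j)/ord_j(q^2)$ distinct monic irreducible polynomials, each of degree $ord_j(q^2)$. This is exactly \cite[Lemma 3]{Sangwisut} with $q$ replaced by $q^2$.

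For the second assertion, I would first record the action of $\dagger$ on roots. Writing $f(x)=\sum_{i=0}^k a_ix^i$ with $a_0,a_k\neq 0$, a direct computation from $f^\dagger(x)=a_0^{-q}\sum_{i=0}^k a_i^q x^{k-i}=a_0^{-q}x^k\,\bar f(1/x)$, where $\bar f(x):=\sum_i a_i^q x^i$, shows that $f^\dagger$ is monic and that $\beta$ is a root of $f$ if and only if $\beta^{-q}$ is a root of $f^\dagger$; indeed $f(\beta)=0$ forces $\bar f(\beta^q)=f(\beta)^q=0$. Moreover, using that the coefficients of a monic polynomial over $\Fqq$ satisfy $a^{q^2}=a$, one checks $(f^\dagger)^\dagger=f$, so $\dagger$ is an involution on monic polynomials over $\Fqq$. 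Now let $p(x)$ be an irreducible factor of $Q_j(x)$ with roots forming the coset $\{sq^{2m}\bmod j\}$. Then $p^\dagger$ has roots $\{-sq^{2m+1}\bmod j\}$, which is the coset of $-sq$; since $\gcd(-sq,j)=1$, $p^\dagger$ is again a monic irreducible factor of $Q_j(x)$. Hence $p$ is self-conjugate-reciprocal if and only if these two cosets coincide, i.e. $-sq\equiv sq^{2k}\pmod j$ for some $k\ge 0$, and cancelling the unit $s$ this reads $j\mid (q^{2k}+q)$, precisely the condition $\pi(j,q^2)=0$.

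The key point, and the payoff of the computation, is that this condition does not involve $s$, so it holds for one irreducible factor if and only if it holds for all of them. Therefore, if $\pi(j,q^2)=0$ every irreducible factor equals its own $\dagger$ and all factors are self-conjugate-reciprocal; if $\pi(j,q^2)=1$ no factor is fixed by $\dagger$, and since $\dagger$ is an involution carrying factors of $Q_j(x)$ to factors of $Q_j(x)$, the factors are grouped into genuine conjugate-reciprocal pairs $\{p,p^\dagger\}$. This reproduces \cite[Lemma 19]{Sangwisut} with $q\to q^2$. The hard part is not the coset count but keeping the two Frobenius exponents straight: the base-field cosets advance by $q^2$ while $\dagger$ induces the map $\beta\mapsto\beta^{-q}$ with the single power $q$, and it is this mismatch that produces the asymmetric condition $j\mid(q^{2k}+q)$. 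I would also take care to verify the involution identity $(f^\dagger)^\dagger=f$ over $\Fqq$, since it is what guarantees the non-fixed factors pair up rather than forming longer orbits.
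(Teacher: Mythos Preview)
Your proposal is correct and follows exactly the approach the paper indicates: the paper does not spell out the argument but simply states that the lemma ``can be obtained by replacing $q$ with $q^2$ in the proofs of \cite[Lemma 3 and Lemma 19]{Sangwisut},'' and your $q^2$-cyclotomic-coset analysis together with the root computation $\beta\mapsto\beta^{-q}$ for $\dagger$ is precisely that substitution carried out in detail. The only difference is that you have written out the details explicitly, whereas the paper leaves them to the cited reference.
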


By Lemma \ref{numgb2}, $\gcd(Q_j(x), x^{{n^\prime}}-\xi^{{n^\prime}})$ can be factored into $\frac{\phi(j)}{\phi(r)ord_j(q^2)}$ distinct monic irreducible polynomials over $\Fqq$ of the same degree $ord_j(q^2)$.
In addition,
\begin{align}\label{gcd}
\gcd(Q_j(x), x^{{n^\prime}}-\xi^{{n^\prime}})=\begin{cases}
\displaystyle\prod_{i=1}^{\gam(j)}g_{ij}(x) &\text{if\;} \pi(j, q^2)=0,\\
\displaystyle\prod_{i=1}^{\be(j)} f_{ij}(x)f^\dagger_{ij}(x) &\text{otherwise},
\end{cases}
\end{align}
where
\begin{align}\label{gamm} 
\gam(j):=\frac {\phi(j)}{\phi(r)ord_j(q^2)},
\end{align}

\begin{align}\label{bet}
\be(j):=\frac {\phi(j)}{2\phi(r)ord_j(q^2)},
\end{align}
$f_{ij}(x)$ and $f^\dagger_{ij}(x)$ are a monic irreducible conjugate-reciprocal polynomial pair, 	 and ~$g_{ij}(x)$~ is a monic irreducible self-conjugate-reciprocal polynomial.   

By \eqref{xn-lam}-\eqref{bdcd},  and \eqref{gcd}, it can be  concluded that
\begin{align} \label{fac}
x^n-\lam=&\left(x^{{n^\prime}}-{\Lambda}\right)^{p^{\nu}}=\left(x^{{n^\prime}}-\xi^{{n^\prime}}\right)^{p^{\nu}} \nonumber \\\nonumber
=& \left(\prod_{j\mid {n^\prime}r,\; \gcd\left(\frac{{n^\prime}r}{j} ,r\right)=1} \gcd(Q_j(x), x^{{n^\prime}}-\xi^{{n^\prime}})\right)^{p^{\nu}}\\
=& \prod_{\substack{j\mid {n^\prime}r,\; \gcd\left(\frac{{n^\prime}r}{j} ,r\right)=1 \\\pi(j, q^2)=0}}\prod_{i=1}^{\gam(j)}\left(g_{ij}(x)\right)^{p^{\nu}}\prod_{\substack{j\mid {n^\prime}r,\; \gcd\left(\frac{{n^\prime}r}{j} ,r\right)=1 \\\pi(j, q^2)=1}}\prod_{i=1}^{\be(j)}\left(f_{ij}(x)\right)^{p^{\nu}}\left(f^\dagger_{ij}(x)\right)^{p^{\nu}}.
\end{align}
For simplicity, let
\begin{align} \label{omega}
\Omega=\left\{j\mid {n^\prime}r ~\middle|~  \gcd\left(\frac{{n^\prime}r}{j},r\right)=1\text{~and~}  \pi(j, q^2)=0\right\}
\end{align}
and
\begin{align} \label{omegaprime}
\Omega'=\left\{j\mid {n^\prime}r ~\middle|~  \gcd\left(\frac{{n^\prime}r}{j},r\right)=1\text{~and~}  \pi(j, q^2)=1\right\}.
\end{align}
Then (\ref{fac}) becomes
\begin{align}
\label{xn-1}
x^n-\lam=& \prod_{j\in\Omega}\prod_{i=1}^{\gam(j)}\left(g_{ij}(x)\right)^{p^{\nu}}\prod_{j\in\Omega'}\prod_{i=1}^{\be(j)}\left(f_{ij}(x)\right)^{p^{\nu}}\left(f^\dagger_{ij}(x)\right)^{p^{\nu}}.
\end{align}
Let $\mathtt{s}$ and  $\mathtt{t}$ denote  the number of monic irreducible self-conjugate-reciprocal polynomials and  the number of monic irredcible conjugate-reciprocal polynomial pairs in the factorization of $x^{{n^\prime}}-{\Lambda}$, respectively.  Then
\begin{align}
\label{ss}\mathtt{s}=\sum_{j\in\Omega}\gam(j) 
\end{align}
and 
\begin{align}\label{tt}
\mathtt{t}=\sum_{j\in\Omega'}\be(j),
\end{align}
where $\gamma$ and $\beta$ are defined in (\ref{gamm})  and (\ref{bet}), respectively.

\begin{ex}
	Consider $\F_4=\left\{0, 1, \al, \al^2=\al+1\right\}$. Let $n=5$. Then $o_4(\al)=3$ and \[\left\{j\mid j \text{ is a divisor of } 15 \text{ and } \gcd\left(\frac{15}{j},3\right)=1 \right\}=\left\{3, 15\right\}.\]  Since  $\pi(3,4)=0,~ \pi(15,4)=1$ and $ord_3(4)=1 $ and $ord_{15}(4)=2$, we have $\gam(3)=\frac{\phi(3)}{\phi(3)ord_3(4)}=1=\be(4)=\frac{\phi(15)}{2\phi(3)ord_{15}(4)}$. Therefore, by (\ref{xn-1})-(\ref{tt}),    the factors of $x^5-\al$  contains $\gamma(3)=1$ irreducible self-conjugate-reciprocal polynomial of dergree $ord_{3}(4)=1$ and $\gamma(5)=1$  irreducible conjugate-reciprocal polynomial pair of dergree  $ord_{15}(4)=2$.
\end{ex}

From the discussion above, we can determine the degrees and the number of self-conjugate-reciprocal polynomials and conjugate-reciprocal polynomial pairs in the factorization of $x^{n^\prime}-\Lambda$ in \eqref{xn-1}.  However, we are not yet able to determine the explicit irreducible factors of $x^{n^\prime}-\Lambda$.  The following algorithm gives the  explicit  factors of $x^{n^\prime}-\Lambda$.

A \textit{$q^2$-cyclotomic coset modulo  ${n^\prime}r$  containing  $a$}, denoted by $S_{q^2}(a)$, is defined to be the set
$$S_{q^2}(a):=\left\{q^{2i}\cdot a \;{\rm mod}\;  {n^\prime}r\mid i=0,1,\ldots \right\}.$$
Since  $\gcd(Q_j(x), x^{{n^\prime}}-\xi^{{n^\prime}})$ can be factored as a product of irreducible  polynomials in $\Fqq[x]$,  $S_j$ is a union of some $q^2$-cyclotomic cosets modulo ${n^\prime}r$.  Therefore, we conclude the following algorithm.
\begin{center}
	\textbf{Algorithm} 
\end{center}
\begin{enumerate}
	\item For each $j|{n^\prime}r$ such that $\gcd\left(\frac{{n^\prime}r}{j},r\right)=1$, find the set $S_j$.
	\item Partition $S_j$ into $q^2$-cyclotomic cosets modulo ${n^\prime}r$.
	\item Determine $\pi(j, q^2)$. 
	\begin{enumerate}[(3.1)]
		\item If $\pi(j, q^2)=1$, then denote by ${T}_j$ a set of $q^2$-cyclotomic cosets of $S_j$ such that $S_{q^2}(a)\in{T}_j$ if and only if $S_{q^2}(-qa)\not\in{T}_j$. Let $\mathscr{T}_j$ denote a set of representative of $q^2$-cyclotomic cosets in each $q^2$-cyclotomic cosets in $T_j$.
		\item If $\pi(j, q^2)=0$, let $\mathscr{S}_j$ denote a set of representative of $q^2$-cyclotomic cosets in $S_j$.
	\end{enumerate}
	\item We have 
	\[x^{{n^\prime}}-{\Lambda}=\prod_{a\in\mathscr{S}_j} m_{\xi^a}(x)\prod_{b\in \mathscr{T}_j}m_{\xi^b}(x)m_{\xi^b}^\dagger(x).\]
\end{enumerate}

The following example illustrates an application of the algorithm.
\begin{ex}\label{ex1}
	Let $\F_4=\left\{0, 1, \al, \al^2=\al+1 \right\}$. Then $o_{4}(\al)=3$. To factor $x^5-\al$ over $\F_4$, let $\xi$ be a primitive 15th root of unity in $\F_{16}$ such that $\al=\xi^{5}$.  Note that all $j|5\cdot 3$ with $\gcd\left(\frac{15}{j},3\right)=1$ are $3$ and $15$. Since $\pi(3,4)=0$ and  $\pi(15,4)=1$,  we have
	$
	S_3=\left\{5z\,\left\vert\, z\in\Z_3^\times, 5z\equiv 1\,{\rm mod}\, 3 \right.\right\}=\{10\}=\mathscr{S}_3
	$
	and 
	$
	S_{15}=\left\{z\,\left\vert\, z\in\Z_{15}^\times, z\equiv 1\,{\rm mod}\, 3 \right.\right\}=\{1, 4, 7, 13\}.
	$
	Partitioning $S_{15}$ into $4$-cyclotomic coset modulo $15$, we have $S_{15}=\{1, 4\}\cup \{7, 13\}$. Then ${T}_{15}=\left\{\left\{1, 4\right\}\right\}$ and $\mathscr{T}_{15}=\left\{ 1\right\}$. 
	Therefore, $x^5-\al$ can be written in term of equation \eqref{xn-1} as  
	\begin{align*}
	x^5-\al&=m_{\xi^{10}}(x)\left( m_{\xi}(x)m_{\xi}(x)^\dagger\right)=(x+\al^2)(x^2+x+\al)(x^2+\al x+\al).
	\end{align*}
\end{ex} 
\section{ Hermitian Hull of $\lam$-Constacyclic Codes} 
In this section,  the dimensions of the Hermitian hulls of constacyclic codes of length $n$ over $\Fqq$ are determined via the factorization of $x^n-\lambda$ given in Section 3.  
The number of  constacyclic Hermitian self-dual codes and the number of Hermitian complementary dual constacyclic codes of length $n$ over $\Fqq$ are  given as well.

\begin{thm}\label{thm24} Let $\mathbb{F}_{q^2}$ denote a finite field of order $q^2$ with characteristic $p$ and let $n=\overline{n}p^{\nu} $ with $p\not\mid \overline{n}$.
	Then the  dimensions of the Hermitian hulls of $\lam$-constacyclic codes of  length $n$  over $\Fqq$  are of the form
	\begin{align}\label{eq24}
	\sum_{j\in\Omega} ord_j(q^2) \cdot \mathsf{a}_j + \sum_{j\in\Omega'} ord_j(q^2)\cdot \mathsf{b}_j,
	\end{align}
	where $0\leq \mathsf{a}_{j}\leq\gam(j)\left\lfloor \frac{p^{\nu}}{2}\right\rfloor$ and $0\leq \mathsf{b}_{j}\leq \be(j)p^{\nu}$.
\end{thm}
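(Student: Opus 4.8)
The plan is to exploit the explicit factorization \eqref{xn-1} together with the $\mathrm{lcm}$-description of the hull. Recall that a $\lam$-constacyclic code $C$ equals $\langle g(x)\rangle$ for a unique monic divisor $g$ of $x^n-\lam$, that $C^{\perp_H}=\langle h^\dagger(x)\rangle$ with $h=(x^n-\lam)/g$, and that $Hull_H(C)=\langle {\rm lcm}(g,h^\dagger)\rangle$; since a constacyclic code whose generator has degree $d$ has dimension $n-d$, this gives $\dim Hull_H(C)=n-\deg {\rm lcm}(g,h^\dagger)$. Using \eqref{xn-1} I would write every admissible $g$ as
\[
g(x)=\prod_{j\in\Omega}\prod_{i=1}^{\gam(j)}g_{ij}(x)^{e_{ij}}\prod_{j\in\Omega'}\prod_{i=1}^{\be(j)}f_{ij}(x)^{s_{ij}}f_{ij}^\dagger(x)^{t_{ij}},
\]
with exponents $0\le e_{ij},s_{ij},t_{ij}\le p^\nu$, so the whole problem reduces to bookkeeping of these exponents; here every $g_{ij},f_{ij},f_{ij}^\dagger$ has degree $ord_j(q^2)$.

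First I would record the elementary properties of $f\mapsto f^\dagger$ that make the bookkeeping work: on monic polynomials with nonzero constant term (which all divisors of $x^n-\lam$ are, since $-\lam\neq 0$) the map is multiplicative, $(fg)^\dagger=f^\dagger g^\dagger$, and sends monic to monic. This holds because $f\mapsto f^\dagger$ is the composition of the reciprocal map $f\mapsto x^{\deg f}f(1/x)$ with coefficientwise $q$-th power, both multiplicative, followed by the monic normalization. Using this, together with $g_{ij}^\dagger=g_{ij}$ and the fact that $\dagger$ swaps $f_{ij}$ and $f_{ij}^\dagger$ (Lemma \ref{numgb2}), I would complement the exponents in $h=(x^n-\lam)/g$ and apply $\dagger$ to obtain $h^\dagger$ whose $g_{ij}$-exponent is $p^\nu-e_{ij}$, whose $f_{ij}$-exponent is $p^\nu-t_{ij}$, and whose $f_{ij}^\dagger$-exponent is $p^\nu-s_{ij}$.

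Next I would form ${\rm lcm}(g,h^\dagger)$ factorwise by taking the larger of the two exponents, so its $g_{ij}$-exponent is $\max(e_{ij},p^\nu-e_{ij})$, its $f_{ij}$-exponent is $\max(s_{ij},p^\nu-t_{ij})$, and its $f_{ij}^\dagger$-exponent is $\max(t_{ij},p^\nu-s_{ij})$. Subtracting $\deg {\rm lcm}(g,h^\dagger)$ from $n$, reading off each contribution from \eqref{xn-1}, and using $p^\nu-\max(u,p^\nu-u)=\min(u,p^\nu-u)$, collapses the $\Omega$-part to $\sum_{j\in\Omega}ord_j(q^2)\,\mathsf{a}_j$ with $\mathsf{a}_j=\sum_i\min(e_{ij},p^\nu-e_{ij})$, and the $\Omega'$-part to $\sum_{j\in\Omega'}ord_j(q^2)\,\mathsf{b}_j$ with $\mathsf{b}_j=\sum_i\bigl(\min(s_{ij},p^\nu-t_{ij})+\min(t_{ij},p^\nu-s_{ij})\bigr)$, which is exactly the shape \eqref{eq24}.

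Finally I would pin down the ranges. For $\mathsf{a}_j$, each summand $\min(u,p^\nu-u)$ runs over $\{0,\dots,\lfloor p^\nu/2\rfloor\}$, so $0\le\mathsf{a}_j\le\gam(j)\lfloor p^\nu/2\rfloor$. For $\mathsf{b}_j$, the two-variable term $\min(s,p^\nu-t)+\min(t,p^\nu-s)$ is handled by the case split $s+t\le p^\nu$ versus $s+t\ge p^\nu$: in the first case it equals $s+t$ and in the second $2p^\nu-s-t$, hence it is always at most $p^\nu$ with equality iff $s+t=p^\nu$, and it attains every value in $\{0,\dots,p^\nu\}$; thus $0\le\mathsf{b}_j\le\be(j)p^\nu$. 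Since the exponents $e_{ij},s_{ij},t_{ij}$ (for all $i$ and all $j$) may be chosen independently, every value of each $\mathsf{a}_j,\mathsf{b}_j$ in its range is realized, giving the achievability half as well. The only genuinely delicate points are the multiplicativity and normalization of $\dagger$ in the second step and the two-variable $\max$/$\min$ simplification in the last step; everything else is routine degree counting against \eqref{xn-1}.
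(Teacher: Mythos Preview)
Your argument is correct and is precisely the explicit version of what the paper intends: the paper's own proof simply cites \cite[Theorem~5]{Sangwisut} and says to replace $\chi$ by $\pi$ and $q$ by $q^2$, and the computation carried out there is exactly your exponent bookkeeping via ${\rm lcm}(g,h^\dagger)$ against the factorization \eqref{xn-1}. Your additional remark that every value in the stated ranges is actually attained is a small bonus beyond what the theorem asserts, but it is also correct.
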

\begin{proof}
	The theorem can be obtained using arguments similar to those in the proof of  \cite[Theorem 5]{Sangwisut} by replacing $\chi$ with $\pi$ and $q$ with $q^2$.
\end{proof}

Next theorem gives a characterization of $\lam$-constacyclic Hermitian self-dual codes in terms of $\Omega$ defined  in \eqref{omega}.
\begin{thm}\label{thm7}
	Let $\mathbb{F}_{q^2}$ denote a finite field of order $q^2$ with characteristic $p$ and let $n=\overline{n}p^{\nu} $ with $p\not\mid \overline{n}$.
	Let $x^n-\lam$ be factored as in \eqref{xn-1}. 
	Then there exists a   $\lambda$-constacyclic Hermitian self-dual code of length $n$ over $\mathbb{F}_{q^2}$ if and only if 
	\begin{enumerate}
		\item $\Omega=\emptyset$, or
		\item $\Omega\neq \emptyset$ ~~and~~ $p=2$.
	\end{enumerate}
	In this case, the generator polynomial of a code is of the form
	\begin{align}\label{genhsd}
	g(x)=\begin{cases}
	\displaystyle\prod_{j\in\Omega'}\prod_{i=1}^{\be(j)}\left(f_{ij}(x)\right)^{v_{ij}}\left({f^\dagger_{ij}(x)}\right)^{w_{ij}}&\text{if\;\;} \Omega=\emptyset, \\
	\displaystyle\prod_{j\in\Omega}\prod_{i=1}^{\gam(j)}\left({g_{ij}(x)}\right)^{2^{\nu-1}}\prod_{j\in\Omega'}\prod_{i=1}^{\be(j)}\left(f_{ij}(x)\right)^{v_{ij}}\left({f^\dagger_{ij}(x)}\right)^{w_{ij}}&\text{if\;\;} \Omega\neq \emptyset \text{~~and~~} p=2,
	\end{cases}
	\end{align}
	where $0\leq v_{ij}, w_{ij}\leq p^{\nu}$ and $v_{ij}+w_{ij}=p^{\nu}$.
\end{thm}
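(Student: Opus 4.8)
The plan is to reduce the whole statement to the generator-polynomial criterion recalled in Section~2: a $\lam$-constacyclic code $C$ with generator polynomial $g(x)$ and check polynomial $h(x)=(x^n-\lam)/g(x)$ is Hermitian self-dual if and only if $g(x)=h^\dagger(x)$. Thus the theorem amounts to deciding for which monic divisors $g$ of $x^n-\lam$ the equation $g=h^\dagger$ is solvable, and to describing all such $g$. Since every monic divisor of $x^n-\lam$ is the generator polynomial of a unique $\lam$-constacyclic code, existence of a self-dual code is equivalent to existence of such a divisor.

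First I would record the two structural properties of the operator $\dagger$ that drive everything. On monic polynomials over $\Fqq$ with nonzero constant term, $\dagger$ is multiplicative, $(fg)^\dagger=f^\dagger g^\dagger$, and is an involution, $(f^\dagger)^\dagger=f$; the latter is a one-line coefficient computation using $a^{q^2}=a$ for $a\in\Fqq$. Consequently $\dagger$ permutes the monic irreducible factors appearing in \eqref{xn-1}: it fixes each self-conjugate-reciprocal factor $g_{ij}$ and interchanges each conjugate-reciprocal pair $f_{ij}\leftrightarrow f^\dagger_{ij}$. Note also that every factor of $x^n-\lam$ has nonzero constant term, so $\dagger$ is defined on $h$ and produces a monic polynomial.

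Next, using unique factorization and \eqref{xn-1}, I would write an arbitrary monic divisor as
$$g(x)=\prod_{j\in\Omega}\prod_{i=1}^{\gam(j)}g_{ij}(x)^{a_{ij}}\prod_{j\in\Omega'}\prod_{i=1}^{\be(j)}f_{ij}(x)^{v_{ij}}\bigl(f^\dagger_{ij}(x)\bigr)^{w_{ij}},$$
with $0\le a_{ij},v_{ij},w_{ij}\le p^{\nu}$. Then $h=(x^n-\lam)/g$ carries the complementary exponents $p^{\nu}-a_{ij}$, $p^{\nu}-v_{ij}$, $p^{\nu}-w_{ij}$, and applying the two properties of $\dagger$ above gives
$$h^\dagger(x)=\prod_{j\in\Omega}\prod_i g_{ij}(x)^{p^{\nu}-a_{ij}}\prod_{j\in\Omega'}\prod_i f_{ij}(x)^{p^{\nu}-w_{ij}}\bigl(f^\dagger_{ij}(x)\bigr)^{p^{\nu}-v_{ij}}.$$
Equating $g=h^\dagger$ and matching exponents of the distinct monic irreducibles (again by unique factorization) yields precisely $a_{ij}=p^{\nu}-a_{ij}$ for every $j\in\Omega$ and $v_{ij}+w_{ij}=p^{\nu}$ for every $j\in\Omega'$.

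Finally I would read off existence and the shape of $g$ from these equations. The pair conditions $v_{ij}+w_{ij}=p^{\nu}$ are always solvable in the allowed range, so they impose no obstruction and contribute the factors $f_{ij}^{v_{ij}}(f^\dagger_{ij})^{w_{ij}}$ with $v_{ij}+w_{ij}=p^{\nu}$ appearing in \eqref{genhsd}. The genuine constraint is $a_{ij}=p^{\nu}/2$: this has a solution with $a_{ij}\in\{0,\dots,p^{\nu}\}$ if and only if $p^{\nu}$ is even, that is $p=2$ (with $\nu\ge1$, so that $a_{ij}=2^{\nu-1}$). Hence if $\Omega=\emptyset$ there is no such constraint and a self-dual code always exists, whereas if $\Omega\neq\emptyset$ a self-dual code exists exactly when $p=2$; forcing $a_{ij}=2^{\nu-1}$ then produces the second branch of \eqref{genhsd}. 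The one point demanding care is the parity/integrality analysis of $a_{ij}=p^{\nu}/2$, which is the heart of the ``only if'' direction; everything else is bookkeeping with unique factorization and the involution property of $\dagger$.
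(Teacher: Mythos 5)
Your proposal is correct and follows essentially the same route as the paper's proof: express an arbitrary monic divisor $g$ of $x^n-\lam$ via the factorization \eqref{xn-1}, compute $h^\dagger$ for $h=(x^n-\lam)/g$, and compare exponents to get $2u_{ij}=p^{\nu}$ on the self-conjugate-reciprocal part and $v_{ij}+w_{ij}=p^{\nu}$ on the pairs, from which both the existence criterion and the form \eqref{genhsd} follow. Your explicit treatment of the multiplicativity and involution properties of $\dagger$, and of the integrality of $a_{ij}=p^{\nu}/2$ (noting $\nu\ge 1$), only spells out steps the paper leaves implicit.
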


\begin{proof}
	Let $C$ be a $\lambda$-constacyclic code of length $n$ over $\Fq$ with the generator polynomial $g(x)$. Then, by (\ref{xn-1}), we have
	\begin{align}\label{genxn1-}
	g(x)= \prod_{j\in\Omega}\prod_{i=1}^{\gam(j)}{\left(g_{ij}(x)\right)}^{u_{ij}}\prod_{j\in\Omega'}\prod_{i=1}^{\be(j)}{\left(f_{ij}(x)\right)}^{v_{ij}}{\left(f^\dagger_{ij}(x)\right)}^{w_{ij}},
	\end{align}
	where $0\leq u_{ij}, v_{ij}, w_{ij}\leq p^{\nu}$. It follows that
	\[h(x):=\frac{x^{{n}}-\lam}{g(x)}=\prod_{j\in\Omega}\prod_{i=1}^{\gam(j)}{\left(g_{ij}(x)\right)}^{p^{\nu}-u_{ij}}\prod_{j\in\Omega'}\prod_{i=1}^{\be(j)}{\left(f_{ij}(x)\right)}^{p^{\nu}-v_{ij}}{\left(f^\dagger_{ij}(x)\right)}^{p^{\nu}-w_{ij}},\]
	and hence,
	\begin{align*}
	h^\dagger(x)=\frac{x^{{n}}-\lam}{g(x)}=\prod_{j\in\Omega}\prod_{i=1}^{\gam(j)}{\left(g_{ij}(x)\right)}^{p^{\nu}-u_{ij}}\prod_{j\in\Omega'}\prod_{i=1}^{\be(j)}{\left(f_{ij}(x)\right)}^{p^{\nu}-w_{ij}}{\left(f^\dagger_{ij}(x)\right)}^{p^{\nu}-v_{ij}}.
	\end{align*}
	
	Assume that   $C$ is Hermitian self-dual. Then $g(x)=h(x)^\dagger$.  By comparing the exponents, we have
	\begin{center}
		$u_{ij}=p^{\nu}-u_{ij}$ \;\;and\;\; $v_{ij}=p^{\nu}-w_{ij}$,\end{center}
	and hence, $2u_{ij}=p^{\nu}$  and $v_{ij}+w_{ij}=p^{\nu}$.  Since $2u_{ij}=p^{\nu}$,  we have    $p=2$ or $\Omega=\emptyset$.
	

	Conversely, assume that  $\Omega=\emptyset$, or
	$\Omega\neq \emptyset$ ~~and~~ $p=2$.  Let $g(x)$ be defined as in (\ref{genhsd}) and $h(x)=\displaystyle\frac{(x^n-\lambda)}{g(x)}$. It is not difficult to see that $g(x)=h^\dagger(x)$, and hence, a constacyclic code generated by $g(x)$ is Hermitian self-dual. 
\end{proof}

\begin{cor}\label{516} Let $\mathtt{t}$ be the number of monic irreducible conjugate-reciprocal polynomial pairs as in \eqref{tt}.
	The number of $\lam$-constacyclic Hermitian self-dual  codes of length $n$ over $\Fqq$ is
	\begin{align*}
	\begin{cases}
	(p^{\nu}+1)^{\mathtt{t}}&\text{if\;\;} \Omega=\emptyset, \\
	(2^{\nu}+1)^{\mathtt{t}}&\text{if\;\;}\Omega\neq \emptyset \text{~~and~~} p=2, \\
	\;\;\;\;\; 0 &\text{if\;\;}\Omega\neq \emptyset \text{~~and~~} p\neq 2.
	\end{cases}
	\end{align*}
	
	In particular, if $\Omega^\prime=\emptyset$ (or equivalently, $\pi({\overline{n}}r, q^2)=0$) and $p=2$, then there exists a unique $\lam$-constacyclic Hermitian self-dual  code. In this case, the generator polynomial is $$\prod_{j\in\Omega}\prod_{i=1}^{\gam(j)}{\left(g_{ij}(x)\right)}^{2^{\nu-1}}.$$
\end{cor}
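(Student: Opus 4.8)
The plan is to count the Hermitian self-dual codes by counting their generator polynomials, using the fact recalled in Section 2 that a $\lam$-constacyclic code of length $n$ over $\Fqq$ is uniquely determined by its generator polynomial, a monic divisor of $x^n-\lam$. Thus the number of such codes equals the number of distinct admissible generator polynomials, and Theorem \ref{thm7} already tells us exactly which monic divisors arise: a Hermitian self-dual code exists precisely when $\Omega=\emptyset$, or when $\Omega\neq\emptyset$ and $p=2$, and in those cases its generator polynomial has the form \eqref{genhsd}. When $\Omega\neq\emptyset$ and $p\neq 2$, Theorem \ref{thm7} forbids any such code, which gives the count $0$ in the third branch immediately.

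For the two remaining branches I would count the free parameters in \eqref{genhsd}. Because \eqref{xn-1} is the factorization of $x^n-\lam$ into pairwise-distinct monic irreducibles and $\Fqq[x]$ is a unique factorization domain, two generator polynomials coincide if and only if all their exponents agree; hence distinct admissible exponent tuples correspond to distinct codes. In \eqref{genhsd} the self-conjugate-reciprocal factors $g_{ij}(x)$ carry the fixed exponent $2^{\nu-1}$ (or do not appear at all when $\Omega=\emptyset$), so they contribute no freedom. Each conjugate-reciprocal pair $\bigl(f_{ij}(x),f^\dagger_{ij}(x)\bigr)$ is subject to the constraint $v_{ij}+w_{ij}=p^{\nu}$ with $0\le v_{ij},w_{ij}\le p^{\nu}$; choosing $v_{ij}\in\{0,1,\dots,p^{\nu}\}$ then determines $w_{ij}$, giving exactly $p^{\nu}+1$ choices per pair. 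Since the pairs are indexed independently and their total number is $\mathtt{t}=\sum_{j\in\Omega'}\be(j)$ by \eqref{tt}, multiplying the choices yields $(p^{\nu}+1)^{\mathtt{t}}$ in the branch $\Omega=\emptyset$ and $(2^{\nu}+1)^{\mathtt{t}}$ in the branch $\Omega\neq\emptyset$, $p=2$, matching the claimed formula.

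Finally, for the particular case I would specialize to $\Omega'=\emptyset$ together with $p=2$. Then \eqref{tt} is an empty sum, so $\mathtt{t}=0$ and the count collapses to $(2^{\nu}+1)^{0}=1$, a unique code; simultaneously the conjugate-reciprocal product in \eqref{genhsd} is empty, leaving the generator polynomial $\prod_{j\in\Omega}\prod_{i=1}^{\gam(j)}\bigl(g_{ij}(x)\bigr)^{2^{\nu-1}}$ as stated, while the equivalence $\Omega'=\emptyset\Leftrightarrow\pi(\overline{n}r,q^2)=0$ follows by unwinding the definition \eqref{omegaprime}. I expect no serious obstacle here: the only points needing genuine care are confirming the converse direction, that every admissible exponent tuple really does produce a Hermitian self-dual code (already supplied by Theorem \ref{thm7}), and verifying that distinct tuples give distinct codes, which is exactly unique factorization combined with the uniqueness of the generator polynomial. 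The rest is the product-rule bookkeeping above.
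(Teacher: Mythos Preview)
Your proposal is correct and follows essentially the same approach as the paper: invoke Theorem~\ref{thm7} to pin down the admissible generator polynomials, observe that the only free parameters are the exponents $v_{ij}$ (with $w_{ij}=p^{\nu}-v_{ij}$) on each of the $\mathtt{t}$ conjugate-reciprocal pairs, and multiply to get $(p^{\nu}+1)^{\mathtt{t}}$. Your write-up is in fact slightly more careful than the paper's, spelling out why distinct exponent tuples yield distinct codes via unique factorization and the uniqueness of the generator polynomial.
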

\begin{proof}
	By Theorem \ref{thm7}, the number of generator polynomials of $\lam$-constacyclic   Hermitain self-dual codes of length $n$ over $\Fqq$ depends only on $v_{ij}$ and $w_{ij}$ such that $v_{ij}+w_{ij}=p^\nu$ where $0\leq v_{ij}, w_{ij}\leq p^\nu$. Then the number of   $\lam$-constacyclic Hermitain self-dual codes of length $n$ over $\Fqq$ is $(p^\nu+1)^t$.
	
	Since the number of generator polynomials of $\lambda$-constacyclic Hermitian self-dual codes of length $n$ over $\Fqq$ depends only on $v_{ij}$ and $w_{ij}$, a unique $\lambda$-constacylic Hermitian self-dual code occurs if $\Omega^\prime=\emptyset$ and $p=2$. It is not difficult to see that $\Omega^\prime=\emptyset$ is equivalent to $\pi({n^\prime}r, q^2)=0$. Therefore, the  generator polynomial of the code is \[\prod_{j\in\Omega}\prod_{i=1}^{\gam(j)}{g_{ij}(x)}^{2^{\nu-1}}.\]
\end{proof}

Necessary and sufficient conditions for constacyclic Hermitian complementary dual codes are given as follows.
\begin{thm}\label{complementary dual} Let $\mathbb{F}_{q^2}$ denote a finite field of order $q^2$ with characteristic $p$ and let $n=\overline{n}p^{\nu} $ with $p\not\mid \overline{n}$.
	Let $C$ be a $\lam$-constacyclic code of length $n$ over $\Fqq$.  Then $C$ is Hermitian complementary dual if and only if its generator polynomial is of the form
	\begin{align*}
	\prod_{j\in\Omega}\prod_{i=1}^{\gam(j)}\left({g_{ij}(x)}\right)^{u_{ij}}\prod_{j\in\Omega'}\prod_{i=1}^{\be(j)}\left(f_{ij}(x)\right)^{v_{ij}}\left(f^\dagger_{ij}(x)\right)^{w_{ij}},
	\end{align*}
	where $u_{ij}\in \left\{0, p^{\nu}\right\}$, and  $(v_{ij}, w_{ij})\in\left\{(0, 0), (p^{\nu},p^{\nu})\right\}$.
\end{thm}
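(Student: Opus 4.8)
The plan is to characterize the Hermitian complementary dual property through the generator polynomial of the Hermitian hull and then read off the admissible exponents by comparing factorizations in $\Fqq[x]$. First I would write the generator polynomial $g(x)$ of $C$ in the form \eqref{genxn1-}, namely
\begin{align*}
g(x)= \prod_{j\in\Omega}\prod_{i=1}^{\gam(j)}{\left(g_{ij}(x)\right)}^{u_{ij}}\prod_{j\in\Omega'}\prod_{i=1}^{\be(j)}{\left(f_{ij}(x)\right)}^{v_{ij}}{\left(f^\dagger_{ij}(x)\right)}^{w_{ij}},
\end{align*}
with $0\leq u_{ij},v_{ij},w_{ij}\leq p^{\nu}$. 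Recalling from the preliminaries that $Hull_H(C)$ is the constacyclic code generated by ${\rm lcm}(g(x),h^\dagger(x))$, where $h(x)=(x^n-\lam)/g(x)$, the key observation is that $C$ is Hermitian complementary dual, i.e.\ $Hull_H(C)=\{0\}$, precisely when this generator has full degree $n$. Since ${\rm lcm}(g(x),h^\dagger(x))$ is a monic divisor of $x^n-\lam$, this happens if and only if
\[{\rm lcm}(g(x),h^\dagger(x))=x^n-\lam.\]
The whole proof then reduces to solving this single identity for the exponents.

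Next I would compute $h^\dagger(x)$ exactly as in the proof of Theorem \ref{thm7}: dividing \eqref{xn-1} by $g(x)$ produces the exponents $p^{\nu}-u_{ij}$, $p^{\nu}-v_{ij}$, $p^{\nu}-w_{ij}$ in $h(x)$, and applying the conjugate-reciprocal operator, which fixes each self-conjugate-reciprocal $g_{ij}$ and interchanges each pair $f_{ij}\leftrightarrow f^\dagger_{ij}$, yields
\begin{align*}
h^\dagger(x)=\prod_{j\in\Omega}\prod_{i=1}^{\gam(j)}{\left(g_{ij}(x)\right)}^{p^{\nu}-u_{ij}}\prod_{j\in\Omega'}\prod_{i=1}^{\be(j)}{\left(f_{ij}(x)\right)}^{p^{\nu}-w_{ij}}{\left(f^\dagger_{ij}(x)\right)}^{p^{\nu}-v_{ij}}.
\end{align*}
Because the polynomials $g_{ij}$, $f_{ij}$, $f^\dagger_{ij}$ occurring in \eqref{xn-1} are pairwise distinct monic irreducibles, the lcm is computed factor by factor by taking the maximum of the two exponents, so the requirement ${\rm lcm}(g(x),h^\dagger(x))=x^n-\lam$ becomes, for each index, an equation of the shape $\max(\cdot,\cdot)=p^{\nu}$.

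Carrying out this comparison gives two families of constraints. For each self-conjugate-reciprocal factor one needs $\max(u_{ij},\,p^{\nu}-u_{ij})=p^{\nu}$, which forces $u_{ij}\in\{0,p^{\nu}\}$. For each conjugate-reciprocal pair one needs \emph{simultaneously} $\max(v_{ij},\,p^{\nu}-w_{ij})=p^{\nu}$ and $\max(w_{ij},\,p^{\nu}-v_{ij})=p^{\nu}$, equivalently $(v_{ij}=p^{\nu}$ or $w_{ij}=0)$ and $(w_{ij}=p^{\nu}$ or $v_{ij}=0)$. The substantive point of the argument is the resolution of this conjunction: expanding the four combinations, the two mixed cases ($v_{ij}=p^{\nu}$ together with $v_{ij}=0$, and $w_{ij}=0$ together with $w_{ij}=p^{\nu}$) are impossible since $p^{\nu}\neq 0$, leaving exactly $(v_{ij},w_{ij})\in\{(0,0),(p^{\nu},p^{\nu})\}$. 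This is precisely the claimed form, and the converse is immediate: any $g(x)$ with exponents in these sets makes every factor-wise maximum equal to $p^{\nu}$, so the lcm identity holds. I expect this conjugate-pair case analysis — keeping the $v\leftrightarrow w$ interchange in $h^\dagger$ straight and ruling out the two contradictory combinations — to be the only step requiring genuine care; everything else is bookkeeping on the factorization \eqref{xn-1}.
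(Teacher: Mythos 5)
Your proposal is correct and follows essentially the same route as the paper's own proof: identify $Hull_H(C)$ with the constacyclic code generated by ${\rm lcm}(g(x),h^\dagger(x))$, reduce the complementary-dual condition to ${\rm lcm}(g(x),h^\dagger(x))=x^n-\lam$, and compare exponents factor by factor via the conditions $\max\{u_{ij},p^{\nu}-u_{ij}\}=p^{\nu}$, $\max\{v_{ij},p^{\nu}-w_{ij}\}=p^{\nu}$, $\max\{w_{ij},p^{\nu}-v_{ij}\}=p^{\nu}$. Your explicit case analysis ruling out the mixed combinations for $(v_{ij},w_{ij})$ is just a more detailed writing-out of the step the paper states tersely.
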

\begin{proof} In the proof of Theorem \ref{thm7}, we have $\lam$-constacyclic codes $C$ and $C^\perp$  of length $n$ over $\Fqq$ generated by $g(x)$  and $h^\dagger(x)$ respectively. Hence, $C$ is a $\lam$-constacyclic Hermitian complementary dual code if and only if ${\rm lcm}(g(x),h^\dagger(x))=x^n-\lam$ or, equivalently $\max\{u_{ij}, p^\nu-u_{ij} \}=p^\nu, \max\{v_{ij}, p^\nu-w_{ij} \}=p^\nu$ and $\max\{w_{ij}, p^\nu-v_{ij} \}=p^\nu$. Thus, $u_{ij}\in \left\{0, p^{\nu}\right\}$, and  $(v_{ij}, w_{ij})\in \{(0, 0), (p^{\nu},p^{\nu})\}$.
\end{proof}

\begin{cor}\label{520} The number of $\lam$-constacyclic Hermitian complementary dual  codes of length $n$ over $\Fqq$ is $$2^{\mathtt{s}+\mathtt{t}},$$ 
	where  $\mathtt{s}$ is the number of monic irreducible self-conjugate-reciprocal polynomials and  $\mathtt{t}$ is the number of monic irreducible conjugate-reciprocal polynomial pairs as in \eqref{tt}.
\end{cor}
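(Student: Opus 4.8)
The plan is to count the number of $\lam$-constacyclic Hermitian complementary dual codes of length $n$ over $\Fqq$ by directly applying the characterization established in Theorem~\ref{complementary dual}, which describes the admissible generator polynomials in terms of independent exponent choices attached to each irreducible factor (or conjugate-reciprocal pair) appearing in the factorization \eqref{xn-1} of $x^n-\lam$. Since each such complementary dual code is determined uniquely by its generator polynomial, it suffices to enumerate the generator polynomials of the prescribed form.

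First I would recall that Theorem~\ref{complementary dual} forces, for each self-conjugate-reciprocal factor $g_{ij}(x)$, the exponent $u_{ij}\in\{0,p^\nu\}$, giving exactly two independent choices for that factor; and for each conjugate-reciprocal pair $\{f_{ij}(x), f^\dagger_{ij}(x)\}$, the pair of exponents $(v_{ij},w_{ij})\in\{(0,0),(p^\nu,p^\nu)\}$, again giving exactly two independent choices for that pair. By \eqref{ss} and \eqref{tt}, the total number of self-conjugate-reciprocal factors (counted over all $j\in\Omega$ and $1\le i\le\gam(j)$) equals $\mathtt{s}=\sum_{j\in\Omega}\gam(j)$, and the total number of conjugate-reciprocal pairs (counted over all $j\in\Omega'$ and $1\le i\le\be(j)$) equals $\mathtt{t}=\sum_{j\in\Omega'}\be(j)$. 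These choices are mutually independent across distinct factors, since the factorization \eqref{xn-1} is into pairwise coprime irreducible powers and a generator polynomial is obtained by freely selecting the multiplicity of each coprime factor.

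Next I would combine these counts multiplicatively: the number of valid generator polynomials is the product over all self-conjugate-reciprocal factors of the number of choices ($2$ each) times the product over all conjugate-reciprocal pairs of the number of choices ($2$ each), which yields $2^{\mathtt{s}}\cdot 2^{\mathtt{t}}=2^{\mathtt{s}+\mathtt{t}}$. Because the correspondence between complementary dual codes and their generator polynomials is a bijection (each $\lam$-constacyclic code has a unique monic generator polynomial dividing $x^n-\lam$), this is precisely the number of $\lam$-constacyclic Hermitian complementary dual codes.

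I do not anticipate a genuine obstacle here, as the corollary is essentially a bookkeeping consequence of Theorem~\ref{complementary dual}. The only point requiring slight care is confirming the independence of the exponent choices across the different irreducible factors, so that the counts genuinely multiply; this independence follows immediately from the coprimality of the factors in \eqref{xn-1} and the fact that a divisor of $x^n-\lam$ is specified by independently choosing the exponent of each irreducible factor. With that verified, the enumeration $2^{\mathtt{s}+\mathtt{t}}$ follows at once.
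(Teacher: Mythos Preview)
Your proposal is correct and follows essentially the same approach as the paper: both invoke Theorem~\ref{complementary dual} to reduce the enumeration to counting the independent binary choices $u_{ij}\in\{0,p^\nu\}$ and $(v_{ij},w_{ij})\in\{(0,0),(p^\nu,p^\nu)\}$, yielding $2^{\mathtt{s}}\cdot 2^{\mathtt{t}}=2^{\mathtt{s}+\mathtt{t}}$. Your write-up is somewhat more explicit about the independence of these choices and the bijection between codes and generator polynomials, but there is no substantive difference in method.
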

\begin{proof}
	From the proof of Theorem \ref{complementary dual}, the number of generator polynomials of $\lam$-constacyclic   Hermitain complementary dual codes of length $n$ over $\Fqq$ depend only on 
	$u_{ij}\in\left\{0, p^\nu\right\}$ and $(v_{ij}, w_{ij})\in\left\{(0,0), (p^\nu,p^\nu) \right\}$.
	Then the number of   $\lam$-constacyclic Hermitain complementary dual codes of length $n$ over $\Fqq$ is $2^{\mathtt{s}+\mathtt{t}}$.
\end{proof}

\section{MDS  Constacyclic Hermitian Self-dual Codes over $\Fqq$}
In this section, we construct a class of MDS $\lambda$-constacyclic  Hermitian self-dual codes over $\Fqq$. Throughout this section, let $n$ be an even positive integer relatively prime to $q$ such that $(nr)| (q^2-1)$ and $r|(q+1)$, where $r=o_{q^2}(\lambda)$. Equivalently,  $n={n^\prime}$ in the previous section.

In \cite{Yang13}, a family of MDS constacyclic Hermitian self-dual code over $\Fqq$ whose length is a divisor of $q-1$ is introduced. We now introduce a new family of MDS constacyclic Hermitian self-dual code over $\Fqq$ whose length is a divisor of $q+1$. Therefore, our family is different to a family in \cite{Yang13} if $n\neq 2$.

Let $\xi$ be a primitive $nr$th root of unity in an extension field $\F_{q^{2k}}$ of $\Fqq$ such that $\xi^n=\lambda$. Then the set of all roots of $x^n-\lam$ is $\left\{\xi, \xi^{r+1}, \xi^{2r+1},\ldots, \xi^{(n-1)r+1} \right\}$. Define 
$$O_{r,n}=\left\{1, r+1, 2r+1,\ldots, (n-1)r+1 \right\}=\left\{ir+1\mid 0\leq i\leq n-1\right\}\subseteq \Z_{nr}.$$
Let $C$ be a $\lam$-constacyclic code. The \textit{roots} of the code $C$ is defined to be the roots of its generator polynomial.
The {\em defining set of $\lam$-constacyclic code} $C$ is defined as $$T:=\left\{ ir+1\in O_{r,n}\mid \xi^{ir+1} \text{~is a root of~} C \right\}.$$ It is not difficult to see that $T\subseteq O_{r,n}$ and $\dim C=n-|T|$.
The following theorem can be obtained by slightly modified \cite[Corollary 3.3]{Yang13}.
\begin{thm}\label{hsdtest}
	Let $C_T$ be a $\lam$-constacyclic code with the defining set $T$. Then
	\begin{enumerate}[(i)]
		\item $C_T$ is a Hermitian self-orthogonal constacyclic code if and only if $O_{r,n}\smallsetminus T\subseteq -qT$.
		\item $C_T$ is a  constacyclic Hermitian self-dual code if and only if $-qT=O_{r,n}\smallsetminus T$, or equivalently, $T\cap -qT=\emptyset$.
	\end{enumerate}
\end{thm}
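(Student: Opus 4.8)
The plan is to translate both statements into the language of defining sets, where the Hermitian dual operation becomes the map $s\mapsto -qs$ on exponents modulo $nr$. The starting point is the fact recorded in Section 2 (Lemma 2.1 of \cite{Yang13}) that $C_T^{\perp_H}$ is the $\lam$-constacyclic code generated by $h^\dagger(x)$, where $h(x)=(x^n-\lam)/g(x)$ and $g(x)$ is the generator polynomial of $C_T$. Since here $n=n'$ is prime to $q$, the polynomial $x^n-\lam$ is separable, so $g(x)=\prod_{s\in T}(x-\xi^{s})$ and $h(x)=\prod_{s\in O_{r,n}\smallsetminus T}(x-\xi^{s})$, and defining sets determine codes exactly.

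First I would record how conjugate-reciprocation acts on roots. Writing $f(x)=a_0+\dots+a_kx^k$, a direct computation from the definition $f^\dagger(x)=a_0^{-q}\sum_i a_i^q x^{k-i}$ gives
\[
f^\dagger(\xi^{-qs})=a_0^{-q}\sum_{i=0}^{k}a_i^q\,\xi^{-qs(k-i)}=a_0^{-q}\,\xi^{-qsk}\Bigl(\sum_{i=0}^{k}a_i\xi^{si}\Bigr)^{q}=a_0^{-q}\,\xi^{-qsk}\,f(\xi^{s})^{q},
\]
so $\xi^{s}$ being a root of $f$ forces $\xi^{-qs}$ to be a root of $f^\dagger$. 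Applying this to $h$, the root set of $h^\dagger$ is $\{\xi^{-qs}\mid s\in O_{r,n}\smallsetminus T\}$, and hence the defining set of $C_T^{\perp_H}$ is $-q(O_{r,n}\smallsetminus T)$.

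Next I would verify that multiplication by $-q$ is a fixed-point-free involution that permutes $O_{r,n}$: since $r\mid(q+1)$ we have $-q\equiv 1\pmod r$, so $-q(ir+1)\equiv 1\pmod r$ stays in $O_{r,n}$, and $\gcd(q,nr)=1$ (from $nr\mid q^2-1$) makes the map injective; moreover $q^2\equiv 1\pmod{nr}$ gives $(-q)^2\equiv 1$, so the map is an involution and $-q(O_{r,n}\smallsetminus T)=O_{r,n}\smallsetminus(-qT)$. With this in hand the two assertions become set identities. Using that for constacyclic codes $C\subseteq C'$ iff the defining set of $C'$ is contained in that of $C$, part (i) reads: $C_T\subseteq C_T^{\perp_H}$ iff $-q(O_{r,n}\smallsetminus T)\subseteq T$; applying the involution $-q$ to both sides turns this into $O_{r,n}\smallsetminus T\subseteq -qT$. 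For (ii), self-duality $C_T=C_T^{\perp_H}$ is equality of defining sets $-q(O_{r,n}\smallsetminus T)=T$, which after applying $-q$ reads $O_{r,n}\smallsetminus T=-qT$. The reformulation $T\cap -qT=\emptyset$ is immediate in one direction; for the converse I would use that a Hermitian self-dual code has dimension $n/2$, so $|T|=n/2$, whence $|{-qT}|=|T|=n/2$ together with disjointness forces $T\cup(-qT)=O_{r,n}$ and thus the partition $-qT=O_{r,n}\smallsetminus T$.

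The genuinely delicate points are small and I would flag them explicitly. The number-theoretic step showing $-q$ is a fixed-point-free involution on $O_{r,n}$ is what makes the whole translation clean (a fixed point $s$ with $(q+1)s\equiv 0\pmod{nr}$ would obstruct self-duality and break the $T\cap -qT=\emptyset$ reformulation), and one must keep straight that $C\subseteq C^{\perp_H}$ corresponds to the \emph{reverse} inclusion of defining sets. I also expect the only real care in part (ii) to be the $|T|=n/2$ counting argument underlying the word ``equivalently'': the disjointness condition coincides with the partition condition precisely because self-duality fixes the dimension. Everything else is routine bookkeeping with the separable factorization of $x^{n}-\lam$.
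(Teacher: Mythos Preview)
Your approach to (i) and to the main equivalence in (ii) is essentially that of the paper: both hinge on the observation that $nr\mid(q^2-1)$ gives $q^2\equiv 1\pmod{nr}$, so multiplication by $-q$ is an involution on $O_{r,n}$ and the condition $-q(O_{r,n}\smallsetminus T)\subseteq T$ from \cite[Corollary~3.3]{Yang13} becomes $O_{r,n}\smallsetminus T\subseteq -qT$ after applying $-q$. The paper simply cites that corollary; you instead recompute the defining set of $C_T^{\perp_H}$ from the generator polynomial of the dual, which is a perfectly good substitute.

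There are, however, two problems. First, you assert that $-q$ acts \emph{fixed-point-freely} on $O_{r,n}$, but you never prove this and it is false in general: Theorem~\ref{nonhsd} of this very paper exhibits elements $a\in O_{r,n}$ with $-qa=a$. Fortunately your argument for (i) and for the first equivalence in (ii) uses only the involution property, so this misstatement does no damage there.

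Second, your treatment of the clause ``or equivalently, $T\cap(-qT)=\emptyset$'' is circular. To pass from disjointness to $-qT=O_{r,n}\smallsetminus T$ you need $|T|=n/2$, and you justify this by ``a Hermitian self-dual code has dimension $n/2$''---but self-duality of $C_T$ is exactly the conclusion you are trying to reach. Indeed the implication fails as literally stated: for $T=\emptyset$ one has $T\cap(-qT)=\emptyset$ while $-qT=\emptyset\neq O_{r,n}$. So the ``equivalently'' requires a tacit side condition (such as $|T|=n/2$), and no argument can close the gap without adding one; even fixed-point-freeness of $-q$ would not suffice. The paper's own proof does not address this point either, writing only that (ii) ``can be obtained similarly.''
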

\begin{proof}
	Note that $q^2\equiv 1\,{\rm mod}\, nr$. Then, by \cite[Corollary 3.3]{Yang13}, $C_T$ is  a Hermitian self-orthogonal constacyclic code if and only if $-q\left(O_{r,n}\smallsetminus T\right)\subseteq T$. Hence, \[O_{r,n}\smallsetminus T=-q\left(-q\left(O_{r,n}\smallsetminus T\right)\right)\subseteq -qT.\] The proof of (ii) can be obtained similarly.
\end{proof}

The BCH bound for constacyclic codes is as follows.
\begin{thm}[{\cite[Theorem 2.2]{Aydin2001}}]\label{bchbdd}
	Let $C$ be a $\lambda$-constacyclic code of length $n$ over $\Fqq$. Let $r=o_{q^2}(\lambda)$. Let $\xi$ be a primitive $nr${\rm th} root of unity in an extension field of $\Fqq$ such that $\xi^{n}=\lambda$. Assume the generator polynomial of $C$ has roots that include the set $\left\{\xi^{ri+1}\mid i_1\leq i\leq i_1+d-1 \right\}$. Then the minimum distance of $C$ is at least $d+1$. 
\end{thm}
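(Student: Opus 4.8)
The plan is to establish this BCH-type bound by the classical Vandermonde determinant argument, adapted to the constacyclic setting. First I would take an arbitrary nonzero codeword $\textbf{c}=(c_0,c_1,\dots,c_{n-1})$ and associate to it the polynomial $c(x)=\sum_{k=0}^{n-1}c_kx^k$. Since the generator polynomial $g(x)$ divides $x^n-\lam$ and $C$ is the ideal it generates, every codeword of degree less than $n$ is an exact polynomial multiple of $g(x)$; hence every root of $g(x)$ is a root of $c(x)$. In particular, the hypothesis gives $c(\xi^{ri+1})=0$ for all $i$ with $i_1\le i\le i_1+d-1$.

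Next I would rewrite these $d$ vanishing conditions in a form amenable to a Vandermonde analysis. Writing $j_1,\dots,j_w$ for the support of $\textbf{c}$ (so that $c_{j_l}\ne 0$ and $w$ is its Hamming weight), we have
\begin{align*}
c(\xi^{ri+1})=\sum_{l=1}^{w}c_{j_l}\xi^{(ri+1)j_l}=\sum_{l=1}^{w}\bigl(c_{j_l}\xi^{j_l}\bigr)\bigl(\xi^{rj_l}\bigr)^{i}=0.
\end{align*}
Setting $\mu_l:=c_{j_l}\xi^{j_l}$ and $\eta_l:=\xi^{rj_l}$, these conditions become $\sum_{l=1}^{w}\mu_l\,\eta_l^{\,i}=0$ for the $d$ consecutive exponents $i=i_1,\dots,i_1+d-1$.

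The core of the argument is a proof by contradiction: assume $w\le d$, and retain only the $w$ equations obtained from $i=i_1,\dots,i_1+w-1$. The coefficient matrix of this homogeneous linear system in the unknowns $\mu_1,\dots,\mu_w$ has determinant equal to $\bigl(\prod_{l}\eta_l^{\,i_1}\bigr)\prod_{l<m}(\eta_m-\eta_l)$, a scaled Vandermonde determinant. I would then argue that this determinant is nonzero, so the system forces $\mu_l=0$ for every $l$, contradicting $\mu_l=c_{j_l}\xi^{j_l}\ne 0$. This yields $w\ge d+1$, which is the claimed lower bound on the minimum distance.

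The step I expect to require the most care is verifying that the scalars $\eta_l=\xi^{rj_l}$ are pairwise distinct and nonzero, since this is precisely what makes the Vandermonde determinant nonvanishing. Nonzeroness is immediate from $\xi\ne 0$. For distinctness, an equality $\eta_l=\eta_m$ would force $rj_l\equiv rj_m\pmod{nr}$, i.e. $j_l\equiv j_m\pmod{n}$; but the support indices $j_l$ are distinct elements of $\{0,1,\dots,n-1\}$, so no such coincidence can occur. This is exactly the place where the primitivity of $\xi$ as an $nr$th root of unity, together with the assumption $w\le d$ (which guarantees that the $w$ chosen exponents lie inside the block of $d$ consecutive roots), enters the proof.
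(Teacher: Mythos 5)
Your proof is correct, but there is nothing in the paper to compare it against: the paper does not prove this statement at all, it simply imports it as \cite[Theorem 2.2]{Aydin2001}. Your argument is the classical BCH-bound argument, which is essentially how the result is established in the cited source, so you have in effect supplied the missing proof rather than diverged from one. The chain of steps checks out: a codeword polynomial reduced modulo $x^n-\lambda$ is still an exact multiple of $g(x)$ because $g(x)\mid x^n-\lambda$ (a point worth stating, and you do); the vanishing conditions become $\sum_{l=1}^{w}\mu_l\eta_l^{\,i}=0$ for $d$ consecutive values of $i$ with $\mu_l=c_{j_l}\xi^{j_l}\neq 0$ and $\eta_l=\xi^{rj_l}$; under the assumption $w\le d$ the retained $w\times w$ system has determinant $\bigl(\prod_l\eta_l^{\,i_1}\bigr)\prod_{l<m}(\eta_m-\eta_l)$; and the crucial verification that the $\eta_l$ are pairwise distinct is exactly where the hypothesis that $\xi$ has exact multiplicative order $nr$ enters, since $\xi^{rj_l}=\xi^{rj_m}$ forces $j_l\equiv j_m\pmod{n}$, impossible for distinct support indices in $\{0,1,\dots,n-1\}$. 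The resulting contradiction gives $w\ge d+1$ for every nonzero codeword, which is the claimed bound.
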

\begin{ex}\label{ex11}
	Let $q=3,~n=4$ and let $\lam=-1$ in $\F_9$.  Then $o_{9}(\lam)=2$ and $O_{2,4}=\{1, 3, 5, 7\}$. Let $T=\{1, 3\}$. Then $-qT=5T=\{5, 7\}$. 
	By Theorems \ref{hsdtest}-\ref{bchbdd} and the Singleton bound, $C_T$ is an MDS $\lambda$-constacyclic Hermitian self-dual code with parameter $[4, 2, 3]$ over $\F_9$.
\end{ex}
\begin{ex}\label{ex12}
	Let $q=11,~n=6$ and let $\lam=\al^{30}$ in $\F_{121}$ where $\al$ is a primitive element of $\F_{121}$. Then $o_{121}(\al^{30})=4$ and $O_{4,6}=\{1, 5, 9, 13, 17, 21\}$. Let $T=\{1, 5, 9\}$. Then $-qT=13T=\{13, 17, 21\}$. 
	By Theorems \ref{hsdtest}-\ref{bchbdd} and the Singleton bound, $C_T$ is an MDS  $\lam$-constacyclic Hermitian self-dual code with parameter $[6, 3, 4]$ over $\F_{121}$.
\end{ex}

Examples \ref{ex11} and \ref{ex12} show that there exist MDS  constacyclic Hermitian self-dual codes. The following theorem is a generalization of Examples \ref{ex11} and \ref{ex12}.

\begin{thm}\label{tmds}
	Let $\lam\in\Fqq$ be such that $r=o_{q^2}(\lambda)$ is even. Let $n$   be an  even integer such that $nr|(q^2-1)$ both  $n$ and $r$ divide $q+1$. Let $$T=\left\{ 1, r+1,\ldots,\left(\frac{n}{2}-1\right)r+1\right\}=\left\{ir+1\mid 0\leq i\leq \frac{n}{2}-1 \right\}.$$ If $\frac{2(q+1)}{nr}$ is odd, then $C_T$ is an MDS $\lambda$-constacyclic  Hermitian self-dual  code with parameters $\left[n,\frac{n}{2},\frac{n}{2}+1\right]$.
\end{thm}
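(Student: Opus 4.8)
The plan is to establish the three claimed invariants separately, treating the dimension and minimum distance as quick consequences of the tools already in place and reserving the real effort for the Hermitian self-duality, which is where the arithmetic hypotheses on $n$, $r$ and $q$ actually come into play.

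First I would dispose of the parameters $\left[n,\frac n2,\frac n2+1\right]$. Since $|T|=\frac n2$, the observation preceding Theorem \ref{hsdtest} gives $\dim C_T=n-|T|=\frac n2$ immediately. For the distance, note that the roots of $C_T$ include $\{\xi^{ri+1}\mid 0\le i\le \frac n2-1\}$, which is a block of $\frac n2$ consecutive roots in the sense of Theorem \ref{bchbdd} (take $i_1=0$ and $d=\frac n2$), so the BCH bound yields $d(C_T)\ge \frac n2+1$. The Singleton bound forces $d(C_T)\le n-\frac n2+1=\frac n2+1$, so equality holds and $C_T$ is MDS. Everything then hinges on showing $C_T$ is Hermitian self-dual.

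For self-duality the plan is to invoke Theorem \ref{hsdtest}(ii), i.e.\ to prove $-qT=O_{r,n}\smallsetminus T$. I would first record two structural facts about multiplication by $-q$ on $\Z_{nr}$. Since $nr\mid(q^2-1)$ we have $(-q)^2=q^2\equiv 1\pmod{nr}$, so $x\mapsto -qx$ is a bijection (in fact an involution) of $\Z_{nr}$; and since $r\mid(q+1)$ we have $-q\equiv 1\pmod r$, so this bijection maps the coset $O_{r,n}=\{x\equiv 1\bmod r\}$ onto itself and hence permutes it. Because $|T|=\frac12|O_{r,n}|$, proving $-qT=O_{r,n}\smallsetminus T$ is equivalent to proving the disjointness $T\cap(-qT)=\emptyset$ required by Theorem \ref{hsdtest}(ii).

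The heart of the argument, and the step where I expect the real work, is to identify this permutation of $O_{r,n}$ explicitly. Writing $-q\equiv m_0 r+1\pmod{nr}$ for the unique $m_0$ with $0\le m_0\le n-1$ (such $m_0$ exists by the previous paragraph), I would use the hypothesis that $\ell:=\frac{2(q+1)}{nr}$ is odd: from $q+1=\frac{nr}{2}\ell$ and $\ell$ odd one gets $-q\equiv 1-\frac{nr}{2}\equiv 1+\frac{nr}{2}\pmod{nr}$, whence $m_0=\frac n2$. Expanding then gives $(-q)(ir+1)\equiv(\tfrac n2 r+1)(ir+1)=\left(\tfrac n2 i r+\tfrac n2+i\right)r+1\pmod{nr}$, and I reduce the coefficient of $r$ modulo $n$. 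Here the evenness of $r$ is decisive: it makes $\frac n2 i r\equiv 0\pmod n$, so the index $i$ is sent to $i+\frac n2\bmod n$. For $0\le i\le\frac n2-1$ this lands in $\{\tfrac n2,\ldots,n-1\}$, giving exactly $-qT=\{jr+1\mid \frac n2\le j\le n-1\}=O_{r,n}\smallsetminus T$, and Theorem \ref{hsdtest}(ii) then closes the proof. The expected obstacle is precisely this modular bookkeeping: tracking how the two parity conditions ($r$ even and $\ell$ odd) conspire to turn multiplication by $-q$ into the half-shift $i\mapsto i+\frac n2$. Once that computation is in hand, the remaining assertions follow mechanically.
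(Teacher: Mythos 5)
Your proof is correct and follows essentially the same route as the paper: both reduce self-duality to Theorem \ref{hsdtest}(ii) and establish the same key congruence $-q(ir+1)\equiv\left(\frac{n}{2}+i\right)r+1 \pmod{nr}$, the only cosmetic difference being that you first compute $-q\equiv 1+\frac{nr}{2}\pmod{nr}$ and kill the cross term $\frac{n}{2}ir^2$ using the evenness of $r$, whereas the paper works modulo $n$ and invokes $n\mid(q+1)$ (equivalent facts under the stated hypotheses). You also spell out the BCH and Singleton bounds for the MDS parameters, which the paper dismisses as ``clear.''
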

\begin{proof}
	Note that $$O_{r,n}=\left\{1, r+1, 2r+1,\ldots, (n-1)r+1 \right\}=\left\{ir+1\mid 0\leq i\leq n-1\right\}\subseteq \Z_{nr}$$ and $$O_{r,n}\smallsetminus T=\left\{\left(\frac{n}{2}\right)r+1, \left(\frac{n}{2}+1\right)r+1,\ldots, (n-1)r+1 \right\}=\left\{\left(\frac{n}{2}+i\right)r+1\mid 0\leq i\leq\frac{n}{2} \right\}.$$ Claim that $-qT=O_{r,n}\smallsetminus T$ such that $-q(ir+1)=1+(\frac{n}{2}+i)r$ for all $0\leq i\leq \frac{n}{2}$. Since $\frac{2(q+1)}{nr}$ is odd, $\frac{q+1}{r}+\frac{n}{2}\equiv 0\,{\rm mod}\, n$. Then $\frac{q+1}{r}+(\frac{n}{2}+i+iq)\equiv \frac{q+1}{r}+\frac{n}{2}+i(q+1)\equiv 0\,{\rm mod}\, n$. We obatin $q+1+\left(\frac{n}{2}+i+iq\right)r\equiv \left(\frac{q+1}{r}\right)r+\left(\frac{n}{2}+i(q+1)\right)r\equiv 0\,{\rm mod}\, nr$, or equivalently, $-q(ir+1)\equiv 1+\left(\frac{n}{2}+i\right)r\,{\rm mod}\, nr$. Therefore, the code $C_T$ is a $\lambda$-constacyclic Hermitian self-dual  code. Clearly,  $C$ is an MDS $\lambda$-constacyclic  code with parameters $\left[n,\frac{n}{2},\frac{n}{2}+1\right]$.
	
\end{proof}

Since the length of the MDS codes given in  \cite{Yang13}  is a divisor of $q-1$ and the MDS codes condtructed in Theorem \ref{tmds}  is a divisor of $q+1$, the later is different from the former whenever $n\ne 2$.  Some families of codes derived from Theorem \ref{tmds} are given in the following example.

\begin{ex} Let $q$  be an odd prime and $m$ be the largest positive integer such that $q\equiv -1 \,{\rm mod}\, 2^m$.  For each $1\leq i\leq m-1$,  let $r=2^i$ and $n= \frac{q+1}{2^{m-i}}$. Then $n|(q+1)$ and $\frac{2(q+1)}{nr}=\frac{q+1}{r^{m}}$ is odd.   Therefore, by Theorem \ref{tmds},  there exists an MDS $[n= \frac{q+1}{2^{m-i}},  \frac{q+1}{2^{m-i+1}},\frac{q+1}{2^{m-i+1}}+1 ]$ code over $\mathbb{F}_{q^2}$ for all  $1\leq i\leq m-1$.
\end{ex}
Conditions for nonexistence MDS $\lambda$-constacyclic  Hermitian self-dual  codes of length $n$ over $\mathbb{F}_{q^2}$ are given as follows.
\begin{thm}\label{nonhsd}
	Let $\lam\in\Fqq$ be such that $r=o_{q^2}(\lambda)$ is even. Let $n$   be an even integer such that $nr|(q^2-1)$. If $a\left(\frac{q+1}{r}\right)\equiv 0\,{\rm mod}\, n$ for some $a$ in $O_{r,n}$ then there are no MDS $\lambda$-constacyclic Hermitian self-dual   codes of length $n$ over $\Fqq$.
\end{thm}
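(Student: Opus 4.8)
The plan is to recognize the hypothesis $a\left(\frac{q+1}{r}\right)\equiv 0 \pmod{n}$ as the assertion that the map $\mu\colon x\mapsto -qx$ on $\Z_{nr}$ has a fixed point lying in $O_{r,n}$, and then to argue that such a fixed point is structurally incompatible with the self-duality criterion of Theorem~\ref{hsdtest}(ii). Throughout I use the standing assumptions of this section, namely $r\mid(q+1)$ and $nr\mid(q^2-1)$: the former guarantees that $\tfrac{q+1}{r}$ is an integer and that $q\equiv -1\pmod r$, while the latter gives $q^2\equiv 1\pmod{nr}$. First I would record the two basic properties of $\mu$ that make the argument run. Since $\mu^2(x)=q^2x\equiv x\pmod{nr}$, the map $\mu$ is an involution of $\Z_{nr}$. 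Moreover it preserves $O_{r,n}$: for $x=ir+1$ one has $\mu(x)=-q(ir+1)\equiv -q\equiv 1\pmod r$, because $ir+1\equiv 1\pmod r$ and $-q\equiv 1\pmod r$, so $\mu(x)$ again has the shape $jr+1$ and thus lies in $O_{r,n}$. Hence $\mu$ restricts to an involution of $O_{r,n}$.

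The next step is to translate the arithmetic hypothesis into the language of $\mu$. Multiplying $a\left(\frac{q+1}{r}\right)\equiv 0\pmod n$ through by $r$ yields $(q+1)a\equiv 0\pmod{nr}$, which is exactly $-qa\equiv a\pmod{nr}$, i.e.\ $\mu(a)=a$. Thus the hypothesis says precisely that the involution $\mu$ has a fixed point $a\in O_{r,n}$.

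Finally I would derive the contradiction. Suppose, for contradiction, that some $\lam$-constacyclic Hermitian self-dual code $C_T$ of length $n$ exists (MDS or not); since $\gcd(n,q)=1$, every such code is determined by a defining set $T\subseteq O_{r,n}$, and Theorem~\ref{hsdtest}(ii) applies. It gives $-qT=O_{r,n}\smallsetminus T$, so $O_{r,n}$ is the disjoint union of $T$ and $\mu(T)=-qT$. The fixed point $a$ lies in exactly one of these two blocks. If $a\in T$, then $a=\mu(a)\in\mu(T)=O_{r,n}\smallsetminus T$, contradicting $a\in T$. If instead $a\in -qT$, write $a=\mu(t)$ with $t\in T$; applying $\mu$ and using $\mu^2=\mathrm{id}$ gives $t=\mu(a)=a$, so $a\in T$, again a contradiction. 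Either way the required partition of $O_{r,n}$ cannot exist, so no Hermitian self-dual code of length $n$ exists, and in particular none that is MDS.

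I expect the only points needing real care to be the reformulation of the hypothesis as a fixed-point statement and the verification that $\mu$ genuinely maps $O_{r,n}$ into itself; once these are in place, the combinatorial obstruction is immediate and there is no heavy computation. It is worth noting that this argument in fact rules out \emph{all} Hermitian self-dual $\lam$-constacyclic codes of length $n$ over $\Fqq$, a conclusion stronger than the stated nonexistence of MDS ones.
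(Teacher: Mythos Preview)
Your proof is correct and follows essentially the same approach as the paper: both recognize the hypothesis as the statement $-qa\equiv a\pmod{nr}$ and then derive a contradiction from the self-duality criterion $-qT=O_{r,n}\smallsetminus T$ of Theorem~\ref{hsdtest}(ii). Your framing in terms of the involution $\mu$ and its fixed points is slightly more conceptual, and your observation that the argument rules out \emph{all} Hermitian self-dual $\lam$-constacyclic codes (not just MDS ones) is a valid strengthening left implicit in the paper.
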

\begin{proof}
	Let $a$ in $O_{r,n}$ be such that $a\left(\frac{q+1}{r}\right)\equiv 0\,{\rm mod}\, n$. Then, $a(q+1)\equiv 0\,{\rm mod}\, nr$, which implies $-qa=a$.
	Let $C_T$ be an MDS $\lambda$-constacyclic  Hermitian self-dual code and let $T\subseteq O_{r,n}$ be the defining set of a code $C_T$. By Theorem \ref{hsdtest}, $T\cap -qT=\emptyset$ and $T\cup -qT=O_{r,n}$.
	
	If $a\in T$, then $a\in T\cap -qT$, a contradiction. If $a\not\in T$, then $a\in -qT$. So $a=-qa\in q^2T=T$, a contradiction. 
\end{proof}

The following example shows that there are no ($-1$)-constacyclic Hermitian self-dual code of length $6$ over $\F_{49}$.
\begin{ex}
	Let $q=7,~n=6$ and let $\lam=-1$ in $\F_{49}$. Thus, $o_{49}(-1)=2$ and $O_{2,6}=\left\{1, 3, 5, 7, 9, 11\right\}$. Since
	$3\cdot \frac{8}{2}=12\equiv 0 \,{\rm mod}\,6,$
	by Theorem \ref{nonhsd}, there are no MDS ($-1$)-constacyclic Hermitian self-dual code of length $6$ over $\F_{49}$.
\end{ex}
\section{Quasi-Twisted Hermitian Self-Dual Codes over $\Fqq$}
In this section, we focus on simple root $(\lam, \ell)$-QT Hermitian self-dual codes of length $n\ell$ over $\Fqq$, or equivalently,  $\gcd(n,q)=1$. The decomposition of $(\lam, \ell)$-QT  codes  is  given. The characterization and enumeration of  $(\lam, \ell)$-QT Hermitian self-dual codes of length $n\ell$ over $\Fqq$ can be obtained via this decomposition.

In \cite{Jia}, QT codes over finite fields with respect to the Euclidean inner product were studied. QT codes were decomposed and the Euclidean duals of such codes are determined. In particular, the characterization of Euclidean self-dual QT codes were given. As a generalization of \cite{Jia} and  \cite{LingI}, we study QT codes over finite fields with respect to the Hermitian inner product.. 

From Lemma \ref{eqipp},  every $(\lam, \ell)$-QT code of length $n\ell$ over $\Fqq$ can be viewed as  an $R$ submodule of $R^\ell$, where   $R:={\Fqq[x]}/{\left\langle x^n-\lam\right\rangle}$.

Define an \textit{involution} $\sim$ on $R$ to be the $\Fqq$-linear map that sends $\al$ to $\al^q$ for all $\al\in\Fqq$ and sends $x$ to $x^{-1}=x^{n-1}$. Let $\left\langle \cdot,\cdot\right\rangle_\sim:R^\ell\times R^\ell\rightarrow R$ be defined by 
$$\left\langle \textbf{v},\textbf{s}\right\rangle_\sim:=\sum_{j=0}^{\ell-1}v_j(x)\widetilde{s_j(x)},$$ where $\textbf{v}=(v_0(x),\ldots,v_{\ell-1}(x))$ and $\textbf{s}=(s_0(x),\ldots,s_{\ell-1}(x))$ in $R^\ell$. The \textit{$\sim$-dual} of $D\subseteq R^\ell$ is defined to be the set
\[D^{\perp_\sim}:=\left\{\textbf{v}\in R^\ell \,\left\vert\, \left\langle \textbf{v},\textbf{s}\right\rangle_\sim=0 ~\text{ for all}~ \textbf{s}\in D \right.\right\}.\]
We say that $D\subseteq R^\ell$ is \textit{$\sim$-self-dual} if $D=D^{\perp_\sim}$.

\begin{prop}
	Let $\textbf{a}, \textbf{b}\in\Fqq^{n\ell }$. Then $\left\langle T_{\lam,\ell}^k(\textbf{a}),\textbf{b}\right\rangle_H=0$ for all $0\leq k\leq n-1$ if and only if $\left\langle \psi(\textbf{a}),\psi(\textbf{b})\right\rangle_\sim=0$.
\end{prop}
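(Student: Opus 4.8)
The plan is to turn both sides into statements about a single element of $R$ and then compare coefficients. Two elementary identities drive everything. First, the twisted shift corresponds to multiplication by $x$: reading off the definitions of $T_{\lam,\ell}$ and of $\psi$ in \eqref{psi}, the $j$th column polynomial of $T_{\lam,\ell}(\textbf{c})$ is $\lam c_{n-1,j}+\sum_{m=0}^{n-2}c_{mj}x^{m+1}$, which is exactly $x\cdot c_j(x)$ in $R$ once the top term $c_{n-1,j}x^{n}$ is reduced using $x^{n}=\lam$. Hence $\psi(T_{\lam,\ell}(\textbf{c}))=x\,\psi(\textbf{c})$ componentwise, and so $\psi(T_{\lam,\ell}^{k}(\textbf{a}))=x^{k}\psi(\textbf{a})$ for every $k$. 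Second, the Hermitian form is the constant term of the $\sim$-form: writing $\textbf{a}=(a_{ij})$, $\textbf{b}=(b_{ij})$ as $n\times\ell$ matrices and expanding
\[
\langle\psi(\textbf{a}),\psi(\textbf{b})\rangle_\sim=\sum_{j=0}^{\ell-1}a_j(x)\widetilde{b_j(x)}=\sum_{j=0}^{\ell-1}\sum_{i,i'=0}^{n-1}a_{ij}\,b_{i'j}^{\,q}\,x^{\,i-i'},
\]
the exponents satisfy $i-i'\in\{-(n-1),\dots,n-1\}$, so the only monomials $x^{i-i'}$ that reduce to a nonzero constant in $R$ are those with $i=i'$. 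Thus the coefficient of $x^{0}$ equals $\sum_{i,j}a_{ij}b_{ij}^{\,q}=\langle\textbf{a},\textbf{b}\rangle_H$.

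Combining the two facts, I would set $P(x):=\langle\psi(\textbf{a}),\psi(\textbf{b})\rangle_\sim\in R$ and use linearity of $\langle\cdot,\cdot\rangle_\sim$ in its first argument together with $\psi(T_{\lam,\ell}^{k}(\textbf{a}))=x^{k}\psi(\textbf{a})$ to obtain
\[
\langle T_{\lam,\ell}^{k}(\textbf{a}),\textbf{b}\rangle_H=[x^{0}]\,\langle\psi(T_{\lam,\ell}^{k}(\textbf{a})),\psi(\textbf{b})\rangle_\sim=[x^{0}]\bigl(x^{k}P(x)\bigr),
\]
where $[x^{0}]$ denotes the constant term in $R$. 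Reducing $x^{k}P(x)$ modulo $x^{n}-\lam$, the only coefficient of $P$ contributing to the constant term is $[x^{0}]P$ when $k=0$, and $[x^{\,n-k}]P$ when $1\le k\le n-1$; in the latter case the single wrap-around produces the factor $\lam$, so that $[x^{0}](x^{k}P)=\lam\cdot[x^{\,n-k}]P$. As $k$ runs over $0,1,\dots,n-1$, these $n$ quantities are, up to the nonzero scalar $\lam$, precisely the coefficients of $x^{0},x^{n-1},x^{n-2},\dots,x^{1}$ of $P$, i.e.\ all $n$ coefficients of $P$.

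It then follows that $\langle T_{\lam,\ell}^{k}(\textbf{a}),\textbf{b}\rangle_H=0$ for all $0\le k\le n-1$ if and only if every coefficient of $P(x)$ vanishes, which is exactly $P(x)=0$ in $R$, that is, $\langle\psi(\textbf{a}),\psi(\textbf{b})\rangle_\sim=0$. The two bookkeeping identities are routine; the one place to be careful is the reduction modulo $x^{n}-\lam$, where one must track the factors of $\lam$ (equivalently $\lam^{-1}$, stemming from $\widetilde{x}=x^{-1}$) produced by wrap-around and confirm they are units, so that the vanishing of $[x^{0}](x^{k}P)$ for all $k$ is genuinely equivalent to $P=0$. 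I expect this $\lam$-bookkeeping, together with checking $\psi\circ T_{\lam,\ell}=x\cdot\psi$ at the wrap coordinate, to be the main (though entirely elementary) obstacle; no deeper input is needed, and the standing hypothesis $o_{q^2}(\lam)\mid(q+1)$ enters only to guarantee that the involution $\sim$ is well defined on $R$.
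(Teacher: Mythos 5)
Your proof is correct and takes essentially the same route as the paper: both arguments boil down to identifying the $n$ coefficients of $P(x)=\left\langle \psi(\textbf{a}),\psi(\textbf{b})\right\rangle_\sim$ with the Hermitian inner products of twisted shifts of $\textbf{a}$ against $\textbf{b}$, up to unit powers of $\lam$ (the paper matches the coefficient of $x^h$ with $\left\langle T_{\lam,\ell}^{-h}(\textbf{a}),\textbf{b}\right\rangle_H$, while you match the constant term of $x^kP(x)$ with $\left\langle T_{\lam,\ell}^{k}(\textbf{a}),\textbf{b}\right\rangle_H$, which is the same comparison read in the opposite direction). If anything, your explicit tracking of the wrap-around factor $\lam$ is more careful than the paper's, whose displayed coefficient formula suppresses these unit factors.
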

\begin{proof}
	Let $\psi(\textbf{a})=\left(a_0(x),\ldots,a_{\ell-1}(x)\right)=\left(\sum_{i=0}^{n-1}a_{i0}x^i,\ldots,\sum_{i=0}^{n-1}a_{i, \ell-1}x^i\right)$ and\\ $\psi(\textbf{b})=\left(b_0(x),\ldots,b_{\ell-1}(x)\right)=\left(\sum_{i=0}^{n-1}b_{i0}x^i,\ldots,\sum_{i=0}^{n-1}b_{i, \ell-1}x^i\right)$. By comparing the coefficients,
	\begin{align}\label{cc}
	0&=\sum_{j=0}^{\ell-1}a_j(x)\widetilde{b_j(x)} =\sum_{j=0}^{\ell-1}\left(\sum_{i=0}^{n-1}a_{ij}x^i\right)\left(\sum_{k=0}^{n-1}b_{kj}^qx^{-k}\right)
	\end{align}
	is equivalent to
	\begin{align}\label{dd}
	\sum_{j=0}^{\ell-1}\sum_{i=0}^{n-1}a_{i+h,j}b_{ij}^qx^h=0
	\end{align}
	for all $0\leq h\leq n-1$, where the subscripts $i+h$ are computed modulo $n$. The expression in \eqref{dd} is equivalent to $\left\langle T_{\lam,\ell}^{-h}(\textbf{a}), \textbf{b}\right\rangle_H=0$ for all $0\leq h\leq n-1$. Since $T_{\lam,\ell}^{-h}=T_{\lam,\ell}^{(n-h)}$, \eqref{cc} is equivalent to $\left\langle T_{\lam,\ell}^k(\textbf{a}),\textbf{b}\right\rangle_H=0$ for all $0\leq k\leq n-1$.
\end{proof}

Next proposition follows  from the definition of QT codes and \cite[Proposition 2.3]{Yang13}.
\begin{prop} \label{prop20}
	Let $C$ be a $(\lam,\ell)$-QT code of length $n\ell $ over $\Fqq$ and let $C^{\perp_H}$ be the Hermitian dual of $C$. Then $C^{\perp_H}$ is a $(\lam^{-q},\ell)$-QT code of length $n\ell $ over $\Fqq$.
\end{prop}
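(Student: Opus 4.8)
The plan is to show that $C^{\perp_H}$ is invariant under the twisted shift $T_{\lam^{-q},\ell}$; since $T_{\lam^{-q},\ell}$ is a bijection on $\Fqq^{n\ell}$, invariance of the finite-dimensional subspace $C^{\perp_H}$ under it forces $T_{\lam^{-q},\ell}(C^{\perp_H})=C^{\perp_H}$, which is exactly the assertion that $C^{\perp_H}$ is $(\lam^{-q},\ell)$-QT. Throughout I use that $C$ being $(\lam,\ell)$-QT means precisely $T_{\lam,\ell}(C)=C$, equivalently $T_{\lam,\ell}^{-1}(C)=C$.

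The engine of the argument is an adjunction identity relating $T_{\lam,\ell}$ and $T_{\lam^{-q},\ell}$ under the Hermitian form, obtained by reducing to the constacyclic ($\ell=1$) case as in \cite[Proposition 2.3]{Yang13}. Viewing a word as an $n\times\ell$ matrix and writing $\mathbf{a}=(a^{(0)},\ldots,a^{(\ell-1)})$ for its columns $a^{(j)}\in\Fqq^n$, I would first record two elementary facts: the Hermitian form splits columnwise, $\langle\mathbf{a},\mathbf{b}\rangle_H=\sum_{j=0}^{\ell-1}\langle a^{(j)},b^{(j)}\rangle_H$ (inner products taken in $\Fqq^n$); and $T_{\lam,\ell}$ acts on each column as the ordinary $\lam$-constacyclic shift $\sigma_\lam$ on $\Fqq^n$. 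The computation underlying the constacyclic case then gives the per-column adjoint relation $\langle c^{(j)},\sigma_{\lam^{-q}}(b^{(j)})\rangle_H=\langle \sigma_\lam^{-1}(c^{(j)}),b^{(j)}\rangle_H$, whose verification is a single reindexing of the shifted sum together with the wraparound term, using that $\lam\in\Fqq$ satisfies $\lam^{q^2}=\lam$ so that $(\lam^{-q})^q=\lam^{-1}$. Summing over $j$ yields the global identity
\[\langle \mathbf{c},T_{\lam^{-q},\ell}(\mathbf{b})\rangle_H=\langle T_{\lam,\ell}^{-1}(\mathbf{c}),\mathbf{b}\rangle_H\qquad\text{for all }\mathbf{b},\mathbf{c}\in\Fqq^{n\ell}.\]

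With this identity the conclusion is immediate. Take $\mathbf{b}\in C^{\perp_H}$ and an arbitrary $\mathbf{c}\in C$. Since $C$ is $(\lam,\ell)$-QT we have $T_{\lam,\ell}^{-1}(\mathbf{c})\in C$, hence $\langle T_{\lam,\ell}^{-1}(\mathbf{c}),\mathbf{b}\rangle_H=0$, and therefore $\langle \mathbf{c},T_{\lam^{-q},\ell}(\mathbf{b})\rangle_H=0$. As $\mathbf{c}$ was arbitrary, $T_{\lam^{-q},\ell}(\mathbf{b})\in C^{\perp_H}$, so $T_{\lam^{-q},\ell}(C^{\perp_H})\subseteq C^{\perp_H}$, and equality follows by bijectivity as noted above.

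I expect the only delicate point to be the bookkeeping in the adjoint identity: one must track how the scalar $\lam$ crosses the Hermitian (hence $q$-semilinear) form, which is precisely what turns the exponent $-1$ into $-q$, and one must treat the boundary/wraparound index separately from the generic indices. An alternative route would invoke the preceding proposition to obtain $\psi(C^{\perp_H})\subseteq\psi(C)^{\perp_\sim}$ and then transport the module structure back through $\psi$; I would nonetheless keep the direct columnwise adjoint computation as the primary argument, since it makes the appearance of the twist $\lam^{-q}$ transparent and does not require analyzing when the involution $\sim$ on $R$ interacts well with the module structure.
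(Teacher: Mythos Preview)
Your proposal is correct and follows essentially the same approach as the paper: both establish the adjoint identity $\langle \mathbf{c},T_{\lam^{-q},\ell}(\mathbf{d})\rangle_H=\lam^{-1}\langle T_{\lam,\ell}^{n-1}(\mathbf{c}),\mathbf{d}\rangle_H$ (your version uses $T_{\lam,\ell}^{-1}=\lam^{-1}T_{\lam,\ell}^{n-1}$, which absorbs the scalar) by the same direct index-shifting computation, then conclude by invariance of $C$. The only cosmetic difference is that you organize the computation columnwise to reduce to the $\ell=1$ constacyclic case, whereas the paper writes out the double sum over $i,j$ in one block; the algebra and the key use of $(\lam^{-q})^q=\lam^{-1}$ are identical.
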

\begin{proof}
	Let $\textbf{d}\in C^{\perp_H}$ and let $\textbf{c}\in C$. Then $\left\langle T_{\lam,\ell}^i(\textbf{c}),\textbf{d}\right\rangle_H=0$ for all $0\leq i\leq n-1$ .
	Since 
	\begin{align*}
	\left\langle\textbf{c} ,T_{\lam^{-q},\ell}(\textbf{d})\right\rangle_H=~&\left\langle\textbf{c} ,T_{\lam^{-1},\ell}(\textbf{d}^q)\right\rangle_E \text{~~where~~} \textbf{d}^q \text{~~denote~~} \left(d_{00}^q,\ldots,d_{n-1,\ell-1}^q\right)\\
	=~&\sum_{j=0}^{\ell-1}c_{0j}d_{n-1,j}^q\lam^{-1}+\sum_{i=1}^{n-1}\sum_{j=0}^{\ell-1}c_{ij}d_{i-1,j}^q\\
	=~&\lam^{-1}\left(\sum_{j=0}^{\ell-1}c_{0j}d_{n-1,j}^q+\sum_{i=1}^{n-1}\sum_{j=0}^{\ell-1}\lam c_{ij}d_{i-1,j}^q \right)\\
	=~&\lam^{-1}\left\langle T_{\lam,\ell}^{n-1}(\textbf{c}), \textbf{d}\right\rangle_H\\
	=~&0,
	\end{align*}
	it follows that 
	$T_{\lam^{-q},\ell}(\textbf{d})\in C^{\perp_H}$. Therefore, $C^{\perp_H}$ is a $(\lam^{-q},\ell)$-QT code.
\end{proof}
By Proposition \ref{prop20},  both $C$ and $C^{\perp_H}$ are $(\lam,\ell)$-QT codes if and only if $o_{q^2}(\lam)|(q+1)$. Therefore, it makes sense to focus on only the case where $o_{q^2}(\lam)|(q+1)$.
\begin{cor}
	Let $\lam\in\Fqq\smallsetminus\{0\}$ be such that $o_{q^2}(\lam)|(q+1)$. Let $C$ be a $(\lam, \ell)$-QT code of length $n\ell$ over $\Fqq$ and let $\psi(C)$ be its image in $R^\ell$ under $\psi$ defined in (\ref{psi}). Then $\psi(C)^{\perp_\sim}=\psi(C^{\perp_H})$. In particular, $C$ is Hermitian self-dual if and only if $\psi(C)$ is $\sim$-self-dual.
\end{cor}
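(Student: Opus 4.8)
The plan is to read off the identity $\psi(C)^{\perp_\sim}=\psi(C^{\perp_H})$ directly from the coefficient-comparison proposition proved just above, combined with the $T_{\lam,\ell}$-invariance of $C^{\perp_H}$; no separate dimension count is needed. First I would pin down the role of the hypothesis. Since $o_{q^2}(\lam)\mid(q+1)$ we have $\lam^{q+1}=1$, so $\lam^{-q}=\lam$; hence by Proposition \ref{prop20} the Hermitian dual $C^{\perp_H}$ is again a $(\lam,\ell)$-QT code, and by Lemma \ref{eqipp} both $\psi(C)$ and $\psi(C^{\perp_H})$ are $R$-submodules of the \emph{same} module $R^\ell$, with $R=\Fqq[x]/\langle x^n-\lam\rangle$. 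This is also exactly the condition under which the involution $\sim$ is well defined on $R$: one needs $\widetilde{x}^{\,n}=x^{-n}=\lam^{-1}$ to agree with $\widetilde{\lam}=\lam^q$, i.e.\ $\lam^{q+1}=1$.

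The heart of the argument is a chain of equivalences that transports the $n$ cyclic Hermitian conditions through $\psi$. Recall that $\psi$ is an $\Fqq$-linear bijection of $\Fqq^{n\ell}$ onto $R^\ell$, so every element of $R^\ell$ is $\psi(\textbf{b})$ for a unique $\textbf{b}\in\Fqq^{n\ell}$. Unwinding the definition of $\perp_\sim$ gives
\[
\psi(\textbf{b})\in\psi(C)^{\perp_\sim}\iff \langle\psi(\textbf{b}),\psi(\textbf{c})\rangle_\sim=0 \ \text{ for all } \textbf{c}\in C.
\]
Applying the coefficient-comparison proposition above (with its two arguments taken to be $\textbf{b}$ and $\textbf{c}$) converts the right-hand condition into $\langle T_{\lam,\ell}^k(\textbf{b}),\textbf{c}\rangle_H=0$ for all $\textbf{c}\in C$ and all $0\le k\le n-1$, that is, $T_{\lam,\ell}^k(\textbf{b})\in C^{\perp_H}$ for every $k$. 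This is where the hypothesis is used decisively: since $C^{\perp_H}$ is $(\lam,\ell)$-QT and hence $T_{\lam,\ell}$-invariant, the family of conditions over all $k$ collapses to the single condition $\textbf{b}\in C^{\perp_H}$ (already the case $k=0$), i.e.\ $\psi(\textbf{b})\in\psi(C^{\perp_H})$. Reading the chain in both directions yields $\psi(C)^{\perp_\sim}=\psi(C^{\perp_H})$.

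The final assertion is then immediate from the injectivity of $\psi$: the code $C$ is Hermitian self-dual iff $C=C^{\perp_H}$, iff $\psi(C)=\psi(C^{\perp_H})$, and by the identity just established this holds iff $\psi(C)=\psi(C)^{\perp_\sim}$, i.e.\ iff $\psi(C)$ is $\sim$-self-dual.

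I do not anticipate a serious obstacle, because the genuinely computational content --- matching the $n$ coefficients of $\langle\psi(\textbf{a}),\psi(\textbf{b})\rangle_\sim$ with the $n$ shifted Hermitian products $\langle T_{\lam,\ell}^k(\textbf{a}),\textbf{b}\rangle_H$ --- is already isolated in the preceding proposition. The only points requiring care are the bookkeeping of which argument of the sesquilinear form $\langle\cdot,\cdot\rangle_\sim$ carries the $q$-power conjugate (so that it is $\psi(C)^{\perp_\sim}$, and not a conjugated variant, that appears), and the fact that the hypothesis $o_{q^2}(\lam)\mid(q+1)$ is precisely what keeps $C^{\perp_H}$ inside the $(\lam,\ell)$-QT family so that its $T_{\lam,\ell}$-invariance may be invoked.
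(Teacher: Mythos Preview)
Your argument is correct and is precisely the one the paper intends: the corollary is stated without proof because it follows immediately from the preceding coefficient-comparison proposition together with Proposition~\ref{prop20}, exactly as you lay out. Your additional remark that the hypothesis $o_{q^2}(\lam)\mid(q+1)$ is what makes the involution $\sim$ well defined on $R$ is a useful observation that the paper leaves implicit.
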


\subsection{Decomposition}
Since $\gcd(q,n)=1$,  by \eqref{xn-1},  $x^n-\lam$ can be factored   as follows
$$x^n-\lam=g_1(x)\ldots g_s(x)h_1(x)h_1^\dagger(x)\ldots h_t(x)h_t^\dagger(x),$$
where $h_j(x)$ and $h_j^\dagger(x)$ are a monic irreducible conjugate-reciprocal polynomial pair for all  $1\leq j\leq t$ and $g_i(x)$ is a monic irreducible self-conjugate-reciprocal polynomial for all  $1\leq i\leq s$.

By the Chinese Remainder Theorem (c.f. \cite{Jia} and \cite{LingI} ), we write
$$R={\Fqq[x]}/{\left\langle x^n-\lam\right\rangle}\cong\left(\prod_{i=1}^s{\Fqq[x]}/{\left\langle g_i(x)\right\rangle} \right)\times\left(\prod_{j=1}^t\left({\Fqq[x]}/{\left\langle h_j(x)\right\rangle}\times {\Fqq[x]}/{\left\langle h_j^\dagger(x)\right\rangle} \right)\right).$$

For simplicity,  let  $G:=\Fqq[x]/\left\langle g_i(x)\right\rangle_i,$ $H'_j:=\Fqq[x]/\left\langle h_j(x)\right\rangle$ \ and   $H''_j:=\Fqq[x]/\left\langle h_j^\dagger(x)\right\rangle$.   Then we have
\begin{equation}\label{h'h''}
R={\Fqq[x]}/{\left\langle x^n-\lam\right\rangle}\cong\left(\prod_{i=1}^sG_i\right)\times\left(\prod_{j=1}^t\left(H'_j\times H''_j\right)\right).
\end{equation}

For an irreducible self-conjugate-reciprocal factor $f(x)$  of $x^n-\lam$ in $\Fqq[x]$ of degree $k$, the map ${\Fqq[x]}/{\left\langle f(x)\right\rangle}$ 
$$\bar{ }:{\Fqq[x]}/{\left\langle f(x)\right\rangle}\longrightarrow {\Fqq[x]}/{\left\langle f(x)\right\rangle}$$
defined by ${c}(x)=\sum_{i=0}^{k-1}c_ix^i+\left\langle f\right\rangle\mapsto \overline{c(x)}=\sum_{i=0}^{k-1}c_i^qx^{-i}+\left\langle f\right\rangle$ is an automorphim.

For irreducible conjugate-reciprocal factors pair $f(x)$ and $f^\dagger(x)$ of $x^n-\lam$
in $\Fqq[x]$ of degree $k$, the extension fields ${\Fqq[x]}/{\left\langle f(x)\right\rangle}$ and ${\Fqq[x]}/{\left\langle f^\dagger(x)\right\rangle}$
are isomorphic. The map
$$\hat{ }:{\Fqq[x]}/{\left\langle f(x)\right\rangle}\longrightarrow {\Fqq[x]}/{\left\langle f^\dagger(x)\right\rangle}$$
defined by ${c}(x)=\sum_{i=0}^{k-1}c_ix^i+\left\langle f(x) \right\rangle\mapsto \widehat{c(x)}=\sum_{i=0}^{k-1}c_i^qx^{-i}+\left\langle f^\dagger(x)\right\rangle$ is an isomorphism.

Using the above isomorphisms, we have 
\begin{equation}\label{h'h'}
R={\Fqq[x]}/{\left\langle x^n-\lam\right\rangle}\cong\left(\prod_{i=1}^sG_i\right)\times\left(\prod_{j=1}^t\left(H'_j\times H'_j\right)\right).
\end{equation}
Let $\sigma_1, \sigma_2$ denote the isomorphisms in \eqref{h'h''} and \eqref{h'h'}, respectively. Then an element $\textbf{r}\in R$ can be written as 
$$\sigma_1(\textbf{r})=\left(r_1,\ldots,r_s,r_1',r_1'',\ldots,r_t',r_t''\right) \text{~~in~~} \left(\prod_{i=1}^sG_i\right)\times\left(\prod_{j=1}^t\left(H'_j\times H''_j\right)\right),$$
where $r_i\in G_i, r_j'\in H_j'$ and $r_j''\in H_j''$, and
\begin{equation}\label{aa}
\sigma_2(\textbf{r})=\left(r_1,\ldots,r_s,r_1',\widehat{r_1''},\ldots,r_t',\widehat{r_t''}\right) \text{~~in~~} \left(\prod_{i=1}^sG_i\right)\times\left(\prod_{j=1}^t\left(H'_j\times H'_j\right)\right),
\end{equation}
where $r_i\in G_i$ and $r_j', \widehat{r_j''}\in H_j'$.
Therefore, an element $\tilde{\textbf{r}}\in R$ can be expressed as
$$\sigma_1(\tilde{\textbf{r}})=\left(\overline{r_1},\ldots,\overline{r_s},\widehat{r_1''},\widehat{r_1'},\ldots,\widehat{r_t''},\widehat{r_t'}\right) \text{~~in~~} \left(\prod_{i=1}^sG_i\right)\times\left(\prod_{j=1}^t\left(H'_j\times H''_j\right)\right),$$
where $\overline{r_i}\in G_i, \widehat{r_j''}\in H_j'$ and $\widehat{r_j'}\in H_j''$,  and
\begin{align}\label{bb}
\sigma_2(\tilde{\textbf{r}})=\left(\overline{r_1},\ldots,\overline{r_s},\widehat{r_1''},{r_1'},\ldots,\widehat{r_t''},{r_t'}\right) \text{~~in~~} \left(\prod_{i=1}^sG_i\right)\times\left(\prod_{j=1}^t\left(H'_j\times H'_j\right)\right),
\end{align}
where $\overline{r_i}\in G_i$ and $ \widehat{r_j''}, {r_j'}\in H_j'$.
\begin{rem}
	If  $f(x)$ is self-conjugate-reciprocal, then $\sim$ induces the field automorphism\, $\bar{ }$ on $\Fqq[x]/\left\langle f(x)\right\rangle\cong \F_{q^{2k}}$.  The map $r\mapsto \overline{r}$ on $\Fqq[x]/\left\langle f(x)\right\rangle$ is actually the map $r\mapsto r^{q^k}$ on $\F_{q^{2k}}$.
\end{rem}

Using statements similar to those in the proof of \cite[Proposition 4.1]{LingI}, we conclude the next proposition.
\begin{prop}\label{ortho}
	Let $\textbf{a}, \textbf{b}\in R^\ell$ and write
	$\textbf{a}=(\textbf{a}_0, \textbf{a}_1,\ldots,\textbf{a}_{\ell-1})$ and $\textbf{b}=(\textbf{b}_0, \textbf{b}_1,\ldots,\textbf{b}_{\ell-1})$. 
	Decomposing $\sigma_2(\textbf{a}_i)$ and $ \sigma_2(\textbf{b}_i)$ using \eqref{h'h'}, we have 
	$$\sigma_2(\textbf{a}_i)=(a_{i1},\ldots,a_{is},a_{i1}',a_{i1}'',\ldots,a_{it}',a_{it}'') 
	\text{ ~~ and ~~ }
	\sigma_2(\textbf{b}_i)=(b_{i1},\ldots,b_{is},b_{i1}',b_{i1}'',\ldots,b_{it}',b_{it}''),$$
	where $a_{ij}, b_{ij}\in G_i$ and $a_{ij}', a_{ij}'', b_{ij}', b_{ij}''\in H_j'$. Then
	\begin{align*}
	\left\langle \sigma_2(\textbf{a}), \sigma_2(\textbf{b})\right\rangle_\sim&=\sum_{i=1}^{\ell-1}\sigma_2(\textbf{a}_i)\widetilde{\sigma_2(\textbf{b}_i)}\\
	&=\left(\sum_i a_{i1}\overline{b_{i1}},\ldots,\sum_ia_{is}\overline{b_{is}},\sum_i a_{i1}'b_{i1}'', \sum_ia_{i1}''b_{i1}',\ldots,\sum_ia_{it}'b_{it}'', \sum_ia_{it}''b_{it}'\right).
	\end{align*}
	In particular, $\left\langle \sigma_2(\textbf{a}), \sigma_2(\textbf{b})\right\rangle_\sim=0$ if and only if $\sum_ia_{ij}\overline{b_{ij}}=0$ for all $1\leq j\leq s$ and 
	$\sum_i a_{ik}'b_{ik}''=0=\sum_ia_{ik}''b_{ik}'$ for all $1\leq k\leq t$.
\end{prop}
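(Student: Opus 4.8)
The plan is to reduce the computation of $\langle \sigma_2(\textbf{a}), \sigma_2(\textbf{b})\rangle_\sim$ to a coordinatewise calculation in the decomposed ring of \eqref{h'h'}, exploiting that $\sigma_2$ is a ring isomorphism and that the involution $\sim$ has an explicit description in these coordinates. First I would transport $\sim$ through $\sigma_2$: by \eqref{aa} and \eqref{bb}, if $\sigma_2(\textbf{r})$ has coordinates $(r_1,\ldots,r_s,r_1',r_1'',\ldots,r_t',r_t'')$ in $\left(\prod_{i=1}^sG_i\right)\times\left(\prod_{j=1}^t(H'_j\times H'_j)\right)$, then $\sigma_2(\tilde{\textbf{r}})$ has coordinates $(\overline{r_1},\ldots,\overline{r_s},r_1'',r_1',\ldots,r_t'',r_t')$. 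In words, in the $\sigma_2$-coordinates the involution acts on each self-conjugate-reciprocal component $G_i$ as the field automorphism $r\mapsto\overline{r}$ (the $q^{k}$-power map identified in the preceding remark) and swaps the two copies of $H'_j$ inside each conjugate-reciprocal pair. This ``bar-and-swap'' description is the crucial structural fact, and it is precisely the Hermitian analogue of the Euclidean computation carried out in \cite[Proposition 4.1]{LingI}.

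Second I would invoke multiplicativity. Since $\sigma_2$ is a ring isomorphism and multiplication in the product ring of \eqref{h'h'} is coordinatewise, for each fixed $i$ we have
$$\sigma_2\big(\textbf{a}_i\,\widetilde{\textbf{b}_i}\big)=\sigma_2(\textbf{a}_i)\,\sigma_2(\widetilde{\textbf{b}_i}),$$
whose $G_j$-coordinate is $a_{ij}\overline{b_{ij}}$ and whose $k$-th $(H'_k\times H'_k)$-coordinate is the componentwise product $(a_{ik}',a_{ik}'')(b_{ik}'',b_{ik}')=(a_{ik}'b_{ik}'',\,a_{ik}''b_{ik}')$, where the coordinate swap from the first step is exactly what pairs $a_{ik}'$ with $b_{ik}''$ and $a_{ik}''$ with $b_{ik}'$. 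Summing over $i$ from $0$ to $\ell-1$ and recalling that $\langle\textbf{a},\textbf{b}\rangle_\sim=\sum_i\textbf{a}_i\widetilde{\textbf{b}_i}$ then yields the asserted coordinate formula for $\langle\sigma_2(\textbf{a}),\sigma_2(\textbf{b})\rangle_\sim$.

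For the ``in particular'' statement I would simply use that an element of the product ring in \eqref{h'h'} is zero if and only if each of its coordinates vanishes; hence $\langle\sigma_2(\textbf{a}),\sigma_2(\textbf{b})\rangle_\sim=0$ if and only if $\sum_i a_{ij}\overline{b_{ij}}=0$ for all $1\le j\le s$ and $\sum_i a_{ik}'b_{ik}''=0=\sum_i a_{ik}''b_{ik}'$ for all $1\le k\le t$.

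The step I expect to demand the most care is the first one, namely correctly tracking how $\sim$ interacts with the two isomorphisms $\sigma_1$ and $\sigma_2$ and confirming that, after conjugating by the maps $\widehat{\phantom{x}}$ and $\overline{\phantom{x}}$, the involution really collapses to the clean bar-and-swap action claimed above. Once that bookkeeping is settled — essentially by reading off \eqref{aa}--\eqref{bb} together with the remark that $\sim$ induces the $q^k$-power automorphism on each self-conjugate-reciprocal factor — the remaining steps are formal consequences of $\sigma_2$ being a ring isomorphism and of coordinatewise arithmetic in a finite product of fields.
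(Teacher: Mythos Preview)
Your proposal is correct and follows essentially the same approach the paper indicates: the paper does not give a detailed proof but simply says the result follows by arguments analogous to \cite[Proposition~4.1]{LingI}, and your ``bar-and-swap'' description of $\sim$ in $\sigma_2$-coordinates (read off from \eqref{aa}--\eqref{bb}) together with the multiplicativity of the ring isomorphism $\sigma_2$ is exactly that argument adapted to the Hermitian setting.
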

By \eqref{h'h'}, we have
\begin{equation*}
R^\ell\cong\left(\prod_{i=1}^sG_i^\ell\right)\times\left(\prod_{j=1}^t\left({H'_j}^\ell\times {H'_j}^\ell\right)\right).
\end{equation*}
In particular, $R$ submodule $C$ of $R^\ell$ can be decomposed as
$$C\cong \left(\prod_{i=1}^sC_i\right)\times\left(\prod_{j=1}^t\left({C'_j}\times {C''_j}\right)\right),$$
where  $C_j'$ and $C_j''$ are linear codes of length $\ell$ over $H'_j$ and $C_i$ is a linear code of length $\ell$ over $G_i$.

By Proposition \ref{ortho}, we have 
$$C^{\perp_{H}}\cong \left(\prod_{i=1}^sC_i^{\perp_{H}}\right)\times\left(\prod_{j=1}^t\left({{C''_j}^{\perp_{E}}}\times {{C'_j}^{\perp_{E}}}\right)\right),$$
and hence, the next corollary follows.
\begin{cor}
	An $R$ submodule $C$ of $R^\ell$ is $\sim$-self-dual, or equivalently, a $(\lam,\ell)$-QT code $\psi^{-1}(C)$ of length $n\ell$ over $\Fqq$ is Hermitian self-dual if and only if
	$$C\cong \left(\prod_{i=1}^sC_i\right)\times\left(\prod_{j=1}^t\left({C'_j}\times {C'_j}^{\perp_E}\right)\right),$$
	where $C_i$ is a Hermitian self-dual code of length $\ell$ over $G_i$ for $1\leq i\leq s$,  $C_j'$ is a linear code of length $\ell$ over $H_j$, and $C_j'^{\perp_E}$ is  Euclidean dual of   $C_j'$ for $1\leq j\leq t$.
\end{cor}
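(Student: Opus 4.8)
The plan is to read the result directly off the two decompositions just established: the product decomposition of $C$ under the Chinese Remainder isomorphism \eqref{h'h'}, and the decomposition of $C^{\perp_H}$ obtained from Proposition \ref{ortho}. By the earlier corollary (the one recording that $\psi(C)^{\perp_\sim}=\psi(C^{\perp_H})$), the code $\psi^{-1}(C)$ is Hermitian self-dual exactly when $C$ is $\sim$-self-dual, i.e. when $C=C^{\perp_H}$ under \eqref{h'h'}. So the entire task reduces to comparing the two product expressions factor by factor.

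First I would recall that \eqref{h'h'} carries $C$ to $\left(\prod_{i=1}^s C_i\right)\times\left(\prod_{j=1}^t (C_j'\times C_j'')\right)$ and, by the computation preceding the statement, carries $C^{\perp_H}$ to $\left(\prod_{i=1}^s C_i^{\perp_H}\right)\times\left(\prod_{j=1}^t (C_j''^{\perp_E}\times C_j'^{\perp_E})\right)$. Since the ambient module is an honest external direct product of the blocks $G_i^\ell$ and ${H_j'}^\ell\times{H_j'}^\ell$, two submodules coincide if and only if their projections to every block coincide. Hence $C=C^{\perp_H}$ is equivalent to the simultaneous conditions $C_i=C_i^{\perp_H}$ for each $1\le i\le s$, together with $(C_j',C_j'')=(C_j''^{\perp_E},C_j'^{\perp_E})$ for each $1\le j\le t$.

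Next I would interpret each block of conditions. The self-conjugate-reciprocal factors contribute precisely the requirement that each $C_i$ be Hermitian self-dual over $G_i$, which is already the desired shape. For a conjugate-reciprocal pair the condition reads $C_j'=C_j''^{\perp_E}$ and $C_j''=C_j'^{\perp_E}$ simultaneously. Here I would invoke the involutivity of the Euclidean dual over the field $H_j'$, namely $(D^{\perp_E})^{\perp_E}=D$ for every linear code $D$, to see that these two equations are equivalent: from $C_j''=C_j'^{\perp_E}$ one recovers $C_j''^{\perp_E}=C_j'$ for free. Thus the pair collapses to a single free choice of a linear code $C_j'$ over $H_j'$ with $C_j''$ forced to equal $C_j'^{\perp_E}$, which is exactly the factor $C_j'\times C_j'^{\perp_E}$ in the statement.

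The only point demanding care is the bookkeeping of the conjugation isomorphisms under $\sigma_1,\sigma_2$: one must verify that after transport through $\sigma_2$ the $\sim$-pairing genuinely decouples into the Hermitian pairing on each $G_i$-block and the two cross-linking Euclidean pairings on each pair of copies of $H_j'$. That coefficient comparison is precisely the content of Proposition \ref{ortho}, so I would cite it rather than redo it; with that in hand the factor-by-factor comparison above yields the corollary immediately.
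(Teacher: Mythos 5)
Your proposal is correct and follows essentially the same route as the paper: decompose $C$ blockwise via the CRT isomorphism \eqref{h'h'}, apply Proposition \ref{ortho} to get $C^{\perp_H}\cong \left(\prod_{i=1}^s C_i^{\perp_H}\right)\times\left(\prod_{j=1}^t\left({C''_j}^{\perp_E}\times {C'_j}^{\perp_E}\right)\right)$, and compare factor by factor, using involutivity of the Euclidean dual to collapse the paired conditions to $C_j''={C_j'}^{\perp_E}$. In fact you spell out the component-wise comparison and the double-dual step that the paper leaves implicit with ``and hence, the next corollary follows.''
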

Let $N(q,\ell)$ (resp., $N_H(q,\ell)$) denote the number of linear codes (resp., Hermitian self-dual codes) of length $\ell$ over $\Fq$.  It is well known \cite{Sloane} that
\begin{align}
N(q,\ell)&=\sum_{i=0}^\ell\prod_{j=0}^{i-1}\frac{q^{\ell}-q^{i}}{q^{i}-q^{j}}
\end{align}
and 
\begin{align}
N_H(q, \ell)&=\begin{cases}\displaystyle\prod_{i=0}^{\frac{\ell}{2}-1}(q^{i+\frac{1}{2}}+1) &\text{ if }  \ell \text{ is even} ,\\
0, &\text{  otherwise},\end{cases}
\end{align}
where the empty product is regarded as $1$.
\begin{prop}
	Let $\Fqq$ be a finite field and let $n, \ell$ be positive integers such that $\ell$ is even and $\gcd(n,q)=1$. Let $\lam$ be a nonzero element in $\Fqq$ such that $o_{q^2}(\lam)|(q+1)$. Suppose that $x^n-\lam=g_1(x)\ldots g_s(x) h_1(x)h_1^\dagger(x)\ldots h_t(x)h_t^\dagger(x)$. Let $d_i\deg g_i(x)$ and $e_j=\deg h_j(x)$. The number of $(\lam,\ell)$-QT Hermitian self-dual  codes of length $n\ell $ over $\Fqq$ is $$\prod_{i=1}^sN_H(q^{2d_i},\ell) \prod_{j=1}^t N(q^{2e_j},\ell).$$  
\end{prop}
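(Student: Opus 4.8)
The plan is to count Hermitian self-dual $(\lam,\ell)$-QT codes by transporting the problem, via the bijection $\psi$ of Lemma \ref{eqipp}, into the count of $\sim$-self-dual $R$-submodules of $R^\ell$, and then to read off the answer from the product decomposition already established in the preceding Corollary. That Corollary, together with the ring isomorphism $\sigma_2$ of \eqref{h'h'}, puts $\sim$-self-dual submodules $C$ of $R^\ell$ in bijection with tuples
$$\left(C_1,\ldots,C_s,\,(C_1',{C_1'}^{\perp_E}),\ldots,(C_t',{C_t'}^{\perp_E})\right),$$
where each $C_i$ is a Hermitian self-dual code of length $\ell$ over $G_i$ and each $C_j'$ is an arbitrary linear code of length $\ell$ over $H_j'$. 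Since $\psi$, $\sigma_1$ and $\sigma_2$ are bijections, the total number factors as a product over the indices $i$ and $j$, so it remains to count the admissible data in each coordinate separately.

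First I would handle the coordinates coming from the self-conjugate-reciprocal factors $g_i(x)$. Here $G_i=\Fqq[x]/\langle g_i(x)\rangle\cong\F_{q^{2d_i}}$, and by the Remark preceding Proposition \ref{ortho} the involution $\sim$ restricts on $G_i$ to the automorphism $r\mapsto r^{q^{d_i}}$, which is exactly the Hermitian conjugation of $\F_{q^{2d_i}}$ regarded as a quadratic extension of $\F_{q^{d_i}}$. By Proposition \ref{ortho} the form $\langle\cdot,\cdot\rangle_\sim$ restricts in this coordinate to $\sum_i a_{ij}\overline{b_{ij}}$, so the codes $C_i$ that can occur are precisely the Hermitian self-dual codes of length $\ell$ over $\F_{q^{2d_i}}$; their number is $N_H(q^{2d_i},\ell)$ by definition. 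The hypothesis that $\ell$ is even guarantees that the even-length branch of $N_H$ applies.

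Next I would treat each conjugate-reciprocal pair $h_j(x),h_j^\dagger(x)$. Through the isomorphism identifying $H_j''$ with $H_j'\cong\F_{q^{2e_j}}$, Proposition \ref{ortho} shows that the form couples this pair by the Euclidean products $\sum_i a_{ij}'b_{ij}''$ and $\sum_i a_{ij}''b_{ij}'$, so the only relation imposed by the preceding Corollary is ${C_j''}={C_j'}^{\perp_E}$. Hence $C_j'$ may be chosen to be any linear code of length $\ell$ over $\F_{q^{2e_j}}$, after which $C_j''$ is forced; the number of admissible pairs in this coordinate is therefore the number of linear codes of that length and field, namely $N(q^{2e_j},\ell)$. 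Multiplying the independent counts over all $i$ and $j$ gives
$$\prod_{i=1}^sN_H(q^{2d_i},\ell)\prod_{j=1}^tN(q^{2e_j},\ell),$$
as claimed. The only genuinely delicate point is the identification in the second paragraph — that Hermitian self-duality of $C_i$ as a code over the abstract field $G_i$ coincides with ordinary Hermitian self-duality over $\F_{q^{2d_i}}$ under the identification $r\mapsto r^{q^{d_i}}$, and that $\sigma_2$ carries $\langle\cdot,\cdot\rangle_\sim$ coordinatewise into these standard forms; this is precisely the content supplied by Proposition \ref{ortho} and the Remark, so no computation beyond invoking them is needed.
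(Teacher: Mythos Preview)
Your proposal is correct and follows exactly the approach implicit in the paper: the paper states this Proposition without proof, as it is an immediate consequence of the preceding Corollary (characterizing $\sim$-self-dual submodules via the decomposition) together with the definitions of $N(q,\ell)$ and $N_H(q,\ell)$. Your identification of the involution on $G_i\cong\F_{q^{2d_i}}$ with $r\mapsto r^{q^{d_i}}$ via the Remark, and of the pairing on the $H_j'$-coordinates via Proposition~\ref{ortho}, is precisely what is needed to justify the count in each factor.
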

In the case where $\Omega =\emptyset$ or $\pi({n^\prime}r, q^2)=0$, the formula for the  number of Hermitian self-dual $(\lam,\ell)$-QT codes of length $n\ell $ over $\Fqq$ can be simplified in the following corollaries.
\begin{cor}Let $x^n-\lam=g_1(x)\ldots g_s(x) h_1(x)h_1^\dagger(x)\ldots h_t(x)h_t^\dagger(x)$. and let $e_j=\deg h_j(x)$.
	If $\Omega=\emptyset$, then  the number of  $(\lam,\ell)$-QT Hermitian self-dual codes of length $n\ell$ over $\Fqq$ is
	$$\prod_{j=1}^t N(q^{2e_j},\ell).$$
\end{cor}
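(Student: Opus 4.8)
The plan is to invoke the preceding proposition and to show that the hypothesis $\Omega=\emptyset$ forces the first product in its formula to be empty. Under the standing assumption of this section that $\gcd(q,n)=1$, we have $\nu=0$ and $p^{\nu}=1$, so the factorization \eqref{xn-1} expresses $x^n-\lam$ as a product of \emph{distinct} monic irreducible polynomials. In that factorization the self-conjugate-reciprocal factors are exactly the $g_{ij}(x)$ indexed by $j\in\Omega$, while the conjugate-reciprocal pairs are the $f_{ij}(x)f^\dagger_{ij}(x)$ indexed by $j\in\Omega'$. Matching this against the notation $x^n-\lam=g_1(x)\ldots g_s(x) h_1(x)h_1^\dagger(x)\ldots h_t(x)h_t^\dagger(x)$ used in the preceding proposition, the number $s$ of self-conjugate-reciprocal factors is precisely $\mathtt{s}=\sum_{j\in\Omega}\gam(j)$ as in \eqref{ss}.

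Next I would observe that each $\gam(j)$ is a positive integer (it counts the irreducible self-conjugate-reciprocal factors arising from $j$), so the sum $\sum_{j\in\Omega}\gam(j)$ vanishes exactly when $\Omega=\emptyset$. Hence the assumption $\Omega=\emptyset$ gives $\mathtt{s}=s=0$, i.e.\ there are no factors $g_i(x)$ at all. Consequently the product $\prod_{i=1}^s N_H(q^{2d_i},\ell)$ appearing in the preceding proposition is an empty product, which by the stated convention equals $1$.

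Finally I would substitute this into the formula of the preceding proposition, so that the number of $(\lam,\ell)$-QT Hermitian self-dual codes of length $n\ell$ over $\Fqq$ reduces to $\prod_{j=1}^t N(q^{2e_j},\ell)$, as claimed. There is no substantive obstacle in this argument; the only points requiring care are the bookkeeping identification of $s$ with $\mathtt{s}$ through \eqref{xn-1} and \eqref{ss}, and the convention that an empty product equals $1$, both of which are already in place in the excerpt.
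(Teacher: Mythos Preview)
Your proposal is correct and matches the paper's intended approach: the paper states this corollary without proof, treating it as an immediate consequence of the preceding proposition, and you have supplied precisely the missing bookkeeping (namely that $\Omega=\emptyset$ forces $s=\mathtt{s}=0$, so the product $\prod_{i=1}^s N_H(q^{2d_i},\ell)$ is empty and equals $1$).
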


\begin{cor}Let $x^n-\lam=g_1(x)\ldots g_s(x) h_1(x)h_1^\dagger(x)\ldots h_t(x)h_t^\dagger(x)$ and let $d_i=\deg g_i(x)$.
	If  $\pi({n^\prime}r, q^2)=0$, then  the number of $(\lam,\ell)$-QT  Hermitian self-dual codes of length $n\ell $ over $\Fqq$ is
	$$\prod_{i=1}^sN_H(q^{2d_i},\ell).$$
\end{cor}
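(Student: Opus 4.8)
The plan is to obtain this corollary as a direct specialization of the preceding proposition, whose formula for the number of $(\lam,\ell)$-QT Hermitian self-dual codes of length $n\ell$ over $\Fqq$ is $\prod_{i=1}^{s}N_H(q^{2d_i},\ell)\prod_{j=1}^{t}N(q^{2e_j},\ell)$. The first product is indexed by the monic irreducible self-conjugate-reciprocal factors $g_i(x)$ of $x^n-\lam$ and the second by the conjugate-reciprocal pairs $h_j(x),h_j^\dagger(x)$. Hence it suffices to show that the hypothesis $\pi(n^{\prime}r,q^2)=0$ forces $t=0$: in that case the second product is an empty product, equal to $1$, and the count collapses to $\prod_{i=1}^{s}N_H(q^{2d_i},\ell)$.

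First I would show that $\pi(n^{\prime}r,q^2)=0$ is inherited by every divisor of $n^{\prime}r$. By the definition of $\pi$, the hypothesis means $n^{\prime}r\mid(q^{2k}+q)$ for some $k\ge 0$. If $j\mid n^{\prime}r$, then transitivity of divisibility gives $j\mid(q^{2k}+q)$ for the same $k$, so $\pi(j,q^2)=0$. Since $\Omega'$, defined in \eqref{omegaprime}, consists exactly of the divisors $j$ of $n^{\prime}r$ with $\gcd(n^{\prime}r/j,r)=1$ and $\pi(j,q^2)=1$, no such $j$ can exist; that is, $\Omega'=\emptyset$. This is the equivalence already recorded in Corollary \ref{516}.

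It then remains only to read off $t=0$ from $\Omega'=\emptyset$. By \eqref{tt} we have $\mathtt{t}=\sum_{j\in\Omega'}\be(j)$, which is an empty sum, hence $\mathtt{t}=0$; equivalently, the factorization \eqref{xn-1} of $x^n-\lam$ contains no conjugate-reciprocal pairs at all. Substituting $t=0$ into the proposition makes the product $\prod_{j=1}^{t}N(q^{2e_j},\ell)$ equal to $1$, which yields the asserted count $\prod_{i=1}^{s}N_H(q^{2d_i},\ell)$. There is no genuine obstacle in this argument; the only step demanding any care is the propagation of $\pi=0$ to divisors, and that is precisely where the transitivity of divisibility is invoked, after which the collapse of the empty pair-product is purely formal.
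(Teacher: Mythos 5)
Your proposal is correct and takes essentially the paper's own (implicit) route: the corollary is just the specialization of the preceding proposition to the case $\Omega'=\emptyset$, which the paper already records in Corollary \ref{516} as equivalent to $\pi(n^{\prime}r,q^2)=0$, so that $\mathtt{t}=0$ by \eqref{tt} and the product $\prod_{j=1}^{t}N(q^{2e_j},\ell)$ is empty, hence equal to $1$. Your divisibility-transitivity step simply fills in the detail the paper dismisses as ``not difficult to see,'' so there is no substantive difference in approach.
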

%
%

\section*{Acknowledgements}
	The authors thank   Borvorn Suntornpoch for useful discussions.



%

\end{document}